\newtheorem{theorem}{Theorem}[section]
\newtheorem{corollary}[theorem]{Corollary}
\newtheorem{lemma}[theorem]{Lemma}
\theoremstyle{definition}
\theoremstyle{remark}
\newtheorem{remark}{Remark}
\numberwithin{equation}{section}
\renewcommand{\Re}{\mathrm{Re}}
\renewcommand{\tilde}{\widetilde}
\renewcommand{\C}{\mathbb{C}}
\newcommand{\N}{\mathbb{N}}
\newcommand{\R}{\mathbb{R}}
\newcommand{\Z}{\mathbb{Z}}
\newcommand{\GL}{\mathrm{GL}}
\newcommand{\SL}{\mathrm{SL}}
\renewcommand{\epsilon}{\varepsilon}
\patchcmd{\section}{\scshape}{\bfseries}{}{}
\renewcommand{\@secnumfont}{\bfseries}
\makeatletter\newcommand{\tpmod}[1]{{\@displayfalse \pmod{#1}}}
\begin{document}

\title{On Multiple Shifted Convolution Sums}

\author{Ikuya Kaneko}
\address{The Division of Physics, Mathematics and Astronomy, California Institute of Technology, 1200 E. California Blvd., Pasadena, CA 91125, USA}
\email{ikuyak@icloud.com}
\urladdr{\href{https://sites.google.com/view/ikuyakaneko/}{https://sites.google.com/view/ikuyakaneko/}}

\thanks{The author acknowledges the support of the Masason Foundation.}

\subjclass[2020]{11M32 (primary); 11F68, 11M41 (secondary)}

\keywords{Shifted convolution problem, quadratic twists, circle method, divisor switching}

\date{\today}

\begin{abstract}
We prove strong estimates for averages of shifted convolution sums consisting~of quadratic twists of $\GL_{2}$ $L$-functions. The key input involves the circle method together with standard tools such as Vorono\u{\i}, quadratic reciprocity, amplification, and divisor switching.
\end{abstract}

\maketitle
\tableofcontents

\section{Introduction}\label{introduction}

\subsection{Brief Retrospection}\label{brief-retrospection}
Given arithmetically interesting sequences of complex numbers $\{a(n) \}_{n \in \N}$ and $\{b(n) \}_{n \in \N}$, the \emph{shifted convolution problem} or the \emph{generalised additive divisor problem} asks for determining the behaviour of (or even just detecting nontrivial cancellations in) correlations of the shape
\begin{equation}\label{eq:ab}
\sum_{T \leq n \leq 2T} a(n) b(n+h).
\end{equation}
Such a sum pertains to various arithmetic problems depending on the sequences $a(n)$ and~$b(n)$. Achieving subconvex bounds for~\eqref{eq:ab} yields salient and sometimes unexpected applications. The archetype is when $a(n)$ and $b(n)$ come from the von Mangoldt function, M\"{o}bius function, or~the divisor function, in which case~\eqref{eq:ab} is related to the Hardy--Littlewood~prime $k$-tuple conjecture~\cite{HardyLittlewood1923-2}, Chowla conjecture~\cite{Chowla1965}, gaps between multiplicative sequences~\cite{Hooley1971,Hooley1994}, and moments of $L$-functions~\cite{ConreyKeating2015}, to name a few. Another example is when $a(n)$ and $b(n)$ come from $\GL_{2}$ Hecke eigenvalues, in which case~\eqref{eq:ab} is related to the subconvexity problem and quantum unique ergodicity. For further details, see~\cite{BlomerHarcos2008,Blomer2004,DeshouillersIwaniec1982,DukeFriedlanderIwaniec1993,Harcos2003,Holowinsky2009,Holowinsky2010,KowalskiMichelVanderKam2002,Leung2022,Leung2022-2,Maga2018,Michel2004,Michel2022,Topacogullari2016,Topacogullari2017,Topacogullari2018}.

\subsection{Statement of the Main Result}\label{statement-of-the-main-result}
It is often beneficial for applications to consider~\eqref{eq:ab} with an averaging over the shifts $h$ in a dyadic interval $[H, 2H]$. Fix~a Hecke--Maa{\ss} cusp~form on the modular surface $\SL_{2}(\Z) \backslash \mathbb{H}$, where $\mathbb{H} \coloneqq \{z = x+iy \in \C: y > 0 \}$ is the upper half-plane upon which the modular group acts via M\"{o}bius transformations. Given a fundamental discriminant $d$, let $\chi_{d} = \left(\frac{d}{\cdot} \right)$ be the primitive quadratic character modulo $|d|$. Then $\varphi \otimes \chi_{d}$ boils down to a Hecke--Maa{\ss} newform of level $|d|^{2}$ and principal nebentypus whose $L$-function is expressed in terms~of a Dirichlet series and an Euler product, both converging absolutely for $\Re(s) > 1$:
\begin{equation*}
L(s, \varphi \otimes \chi_{d}) \coloneqq \sum_{n = 1}^{\infty} \frac{\lambda_{\varphi}(n) \chi_{d}(n)}{n^{s}} = \prod_{p} \left(1-\frac{\lambda_{\varphi}(p) \chi_{d}(p)}{p^{s}}+\frac{\chi_{d}(p)^{2}}{p^{2s}} \right)^{-1}.
\end{equation*}

For $1 \leq H \leq T$, we define the \emph{multiple shifted convolution problem}\footnote{This name stems from multiple $L$-functions, namely $L$-functions whose coefficients are again $L$-functions. They have proven to be a quite powerful and elegant tool that in some cases is capable of yielding results~that are not yet available with other techniques. To circumvent terminological redundancy, it is convenient in this paper to call the averaged version~\eqref{eq:def-M} a (multiple) shifted convolution problem, albeit being less standard.} by
\begin{equation}\label{eq:def-M}
\mathcal{M}_{\varphi}(T, H) \coloneqq \sideset{}{^{\ast}} \sum_{H \leq h \leq 2H} \ \sideset{}{^{\ast}} \sum_{T \leq n \leq 2T} 
L \left(\frac{1}{2}, \varphi \otimes \chi_{8n} \right) L \left(\frac{1}{2}, \varphi \otimes \chi_{8(n+h)} \right),
\end{equation}
where the asterisks mean that each sum runs through positive squarefree integers $n$ and~$n+h$ such that $(n, 2) = 1$ and $(n+h, 2) = 1$, respectively. In analogy with the shifted convolution problem for $\GL_{2}$, one should expect substantial cancellations in $\mathcal{M}_{\varphi}(T, H)$. In this paper,~we study an unconditional quantitative manifestation of this conjecture in certain ranges of $H$.
\begin{theorem}\label{main}
Let $\varphi$ be a Hecke--Maa{\ss} cusp form on $\SL_{2}(\Z) \backslash \mathbb{H}$. Then we have for any~$\epsilon > 0$ that
\begin{equation*}
\mathcal{M}_{\varphi}(T, H) \ll_{\varphi, \epsilon} T^{\frac{5}{4}+\epsilon}, \qquad T^{\frac{1}{4}} \leq H \leq \sqrt{T}.
\end{equation*}
\end{theorem}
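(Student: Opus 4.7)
The plan is to expand both central $L$-values by the approximate functional equation, swap summations to put the divisor variables $(m_1, m_2)$ on the outside, and exploit quadratic reciprocity together with a delta-symbol version of Poisson summation in the $n$-aggregate in order to shorten the dual lengths. The arithmetic dust then settles into a main (diagonal) contribution plus bilinear sums that I would dispatch by Vorono\u{\i} summation and divisor switching.

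Concretely, the first step is to apply a balanced approximate functional equation with smooth cutoff at $m_1, m_2 \ll T^{1+\epsilon}$, reducing $\mathcal{M}_{\varphi}(T, H)$ to a weighted quadruple sum in $(m_1, m_2, n, h)$ whose arithmetic content is $\chi_{8n}(m_1)\chi_{8(n+h)}(m_2)$. Quadratic reciprocity flips these Kronecker symbols to $\left(\frac{m_1}{n}\right)\left(\frac{m_2}{n+h}\right)$ up to a sign depending on $m_1, m_2 \pmod{8}$; after this move, $n$ and $n+h$ appear as denominators and the joint character becomes genuinely periodic modulo a divisor of $4m_1 m_2$ once $h$ is fixed.

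Next, I would decouple $n$ and $n' := n+h$ via a circle-method delta detector (DFI or Jutila's variant) with moduli of size $Q \asymp \sqrt{T}$, so that the two variables split into independent exponential sums twisted by the Kronecker symbols. Poisson summation applied to each of $n$, $n'$ modulo $q$ times an appropriate divisor of $m_1 m_2$ then produces a frequency-zero diagonal---whose contribution one checks is $O(T^{5/4+\epsilon})$ using known second-moment inputs for quadratic twists---together with non-zero frequencies carrying hyper-Kloosterman-type sums of controllable modulus. Vorono\u{\i} summation on $m_1$ and $m_2$ reduces the effective length of each divisor sum, after which interchanging the roles of the delta-method modulus $q$ and one of the $m_i$ (the divisor switching step) produces the decisive structural saving. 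An amplifier on the $m_i$-side may be needed to absorb generic configurations.

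The main obstacle will be the off-diagonal frequencies, where shifted hyper-Kloosterman sums must deliver essentially square-root cancellation once combined with the averaging over $h$. Heuristically, a bound beyond Cauchy--Schwarz $\mathcal{M}_{\varphi}(T, H) \ll H T^{1+\epsilon}$ is required precisely when $H > T^{1/4}$, and the upper constraint $H \leq \sqrt{T}$ is exactly what ensures that the delta-method moduli $Q \asymp \sqrt{T}$ keep the dual variables in the range where divisor switching saves a factor of order $\sqrt{H / T^{1/4}}$; the cancellations then balance out to the claimed bound $T^{5/4+\epsilon}$.
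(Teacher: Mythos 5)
Your proposal assembles the right toolbox---circle method, approximate functional equation, quadratic reciprocity, Poisson, Vorono\u{\i}, divisor switching, amplification---but the arrangement you describe is structurally different from the one that makes the bound work, and several of the steps you assert would need separate (and nontrivial) justification. Most importantly, you never apply Poisson summation \emph{in the shift variable} $h$. In the paper, the circle method is applied \emph{before} the approximate functional equations, so the $h$-sum becomes $\sum_h e(-ah/c)$ times slowly varying weights, and Poisson collapses it to a dual variable constrained to $h \ll cd^2/H$; this is the one place where the average over $H \leq h \leq 2H$ actually buys a factor of $H$, and it is also what produces the symmetry in $(m,n)$ that lets the argument avoid Cauchy--Schwarz. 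By expanding both $L$-values first, you forfeit this structure, and your remark that the off-diagonal ``must deliver essentially square-root cancellation once combined with the averaging over $h$'' is an assumption rather than a step.

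Two further specific problems. First, the claim that the frequency-zero term after Poisson in $n$, $n+h$ ``is $O(T^{5/4+\epsilon})$ using known second-moment inputs'' is not self-evident: a crude second-moment bound with the $h$-sum kept trivial gives $O(HT^{1+\epsilon})$, which is $T^{3/2+\epsilon}$ at the top of the range $H = \sqrt{T}$, so some genuine gain is needed even for the zero frequency, and you have not supplied it. Second, your descriptions of amplification and divisor switching do not match the mechanism that actually makes the exponents close. The amplifier in the paper lengthens the delta-method modulus $c$ to $c \sim LC$, which is precisely what makes the complementary divisor $q \sim T/L$ (coming from rewriting the congruence $\ell_1 n \equiv \ell_2 m \tpmod c$ as $\ell_1 n = \ell_2 m + cq$) \emph{smaller} than $c$; amplifying ``on the $m_i$-side'' or ``interchanging $q$ with one of the $m_i$'' is a different operation, and it is not clear it produces the required conductor drop before the final Vorono\u{\i} step. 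Moreover, the paper does not rely on cancellation in any Kloosterman-type sum; the endgame after two rounds of Poisson and two Vorono\u{\i}'s is a straight Rankin--Selberg estimate and a trivial bound, with the saving coming entirely from length reductions. As written, your proposal would need a separate argument for both the zero frequency and the off-diagonal, and without Poisson in $h$ there is no visible source for the requisite power of $H$.
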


In down-to-earth terms, Theorem~\ref{main} asserts that the total saving that we attain is roughly of size $H T^{-\frac{1}{4}} \geq 1$, since the trivial bound is $O_{\varphi, \epsilon}(HT^{1+\epsilon})$ via the second moment bound for quadratic twists and Cauchy--Schwarz (trivially bounding the $h$-sum). Theorem~\ref{main}, however, falls well shy of the truth since one would expect the best possible bound to be $O_{\varphi, \epsilon}(T^{1+\epsilon})$.

\begin{remark}
Our method also works when $\varphi$ is either holomorphic or Eisenstein, but we here restrict to the Maa{\ss} case for brevity, which is fundamentally formidable over the~others in the sense that the Ramanujan--Petersson conjecture for Maa{\ss} forms is unproven up until~now.
\end{remark}

\begin{remark}
For brevity, we restrict to positive fundamental discriminants of the form $8n$ and $8(n+h)$, but we may deal similarly with all discriminants. This assumption is also imposed~in the work of Soundararajan--Young~\cite{SoundararajanYoung2010} and Li~\cite{Li2022} and enables subsequent~discussions.
\end{remark}

The proof of Theorem~\ref{main} relies on the Duke--Friedlander--Iwaniec \emph{circle method} along~with standard manipulations including Vorono\u{\i}, Poisson, orthogonality, and quadratic reciprocity. The crucial ingredients include \emph{divisor switching}, which guarantees a conductor drop in~other summations. Nonetheless, this manoeuvre sacrifices the complementary divisor being larger than the original divisor. To eschew this drawback, we utilise an \emph{amplification}. It appears at first glance that applying the lengthening here is nonsense, but in fact facilitates a conductor drop in Poisson summation. This should be thought of as an analogue of the trick of Li~\cite{Li2022}. It behoves us to mention that the proof of Theorem~\ref{main} does not require Cauchy--Schwarz because the sum over $n$ in~\eqref{eq:def-M} becomes symmetrical after using the circle method.

We comment that the multiple shifted convolution problem that we address pertains to~the quantum unique ergodicity conjecture for half-integral weight Eisenstein series. In fact, their $n$-th Fourier coefficient $c_{t}(n)$ may be written in the form
\begin{equation*}
c_{t}(n) = \frac{(\star)}{\zeta(1+it)} \cdot L \left(\frac{1}{2}+it, \chi_{n} \right),
\end{equation*}
where $(\star)$ hides some fairly tame fudge factors. The contribution of an incomplete Eisenstein series boils down to the second moment problem for $L(\frac{1}{2}+it, \chi_{n})$, while the contribution of an incomplete Poincar\'{e} series boils down to the shifted convolution problem of the shape
\begin{equation*}
\sideset{}{^{\ast}} \sum_{n \sim t} L \left(\frac{1}{2}+it, \chi_{n} \right) L \left(\frac{1}{2}+it, \chi_{n+h} \right)
\end{equation*}
for any fixed $h \ne 0$. Choosing $\varphi = |\cdot|_{\mathbb{A}}^{it}$ for $t \asymp T$ in~\eqref{eq:def-M} recovers the above expression (but with an averaging over the shifts $h$). Note that one would expect
\begin{equation*}
\sideset{}{^{\ast}} \sum_{n \sim t} \left|L \left(\frac{1}{2}+it, \chi_{n} \right) L \left(\frac{1}{2}+it, \chi_{n+h} \right) \right| 
\asymp t \sqrt{\log t}.
\end{equation*}
This type of bound appears in the work of Holowinsky--Soundararajan~\cite{Holowinsky2009,HolowinskySoundararajan2010,Soundararajan2010-2}, which adopts Shiu's bound; see~\cite{ElliottMorenoShahidi1984,Nair1992,NairTenenbaum1998,Shiu1980}. In the half-integral weight case, Shiu's bound does not work as the coefficients are not multiplicative, but a tight upper bound follows from the Maa{\ss}--Selberg relation instead. Petridis--Raulf--Risager~\cite{PetridisRaulfRisager2014}~established quantum unique ergodicity for half-integral weight Eisenstein series under subconvex bounds for multiple Dirichlet series. For a general theory of multiple Dirichlet series and applications thereof, see for instance~\cite{Blomer2011,BlomerGoldmakherLouvel2014,Bump,BumpFriedbergGoldfeld2012,BumpFriedbergGoldfeldHoffstein2006,BumpFriedbergHoffstein1996,Cech2022-2,Cech2022,Cech2023,ChintaGunnells2007,ChintaGunnells2010,Dahl2015,Dahl2018,DiaconuGoldfeldHoffstein2003,FriedbergHoffsteinLieman2003,GaoZhao2023,GoldfeldHoffstein1985,PetridisRaulfRisager2014,Sawin2023,Wachter2021}.

\subsection{Discussions on the Proof}\label{discussions-on-the-proof}
This section unveils a heuristic argument for Theorem~\ref{main} in a back-of-the-envelope fashion, giving a high-level sketch geared to experts. It is structured such that any reader can understand the flow of the discussion. There is a caveat that we here ignore various technicalities such as complicated smooth weights and a number of coprimality conditions and common divisors. We pretend that everything is coprime to everything, which is morally not too far from reality. Furthermore, we have freedom to use~quadratic~reciprocity, which allows us to flip the numerator and denominator in the Jacobi--Kronecker symbol up~to a correction factor that we shall elide. Given a Hecke--Maa{\ss} cusp form $\varphi$ and $T^{\frac{1}{4}} \leq H \leq \sqrt{T}$, we wish to estimate nontrivially a multiple shifted convolution problem roughly of the shape\footnote{Here and henceforth, the meaning of the symbol $\approx$ is left vague on purpose. Furthermore, we shall write temporarily $n \sim T$ in place of $T \leq n \leq 2T$, which applies to other summations.}
\begin{equation*}
\mathcal{M}_{\varphi}(T, H) \approx \sum_{h \sim H} \sum_{n \sim T} 
L \left(\frac{1}{2}, \varphi \otimes \chi_{n} \right)  L \left(\frac{1}{2}, \varphi \otimes \chi_{n+h} \right),
\end{equation*}
where we drop the superscripts $\ast$ in the definition~\eqref{eq:def-M} for simplicity. While such individual shifted convolution sums are out of reach of current technology, we can leverage an averaging over $h$ for a gain. We now insert the Kronecker symbol $\delta(m = n)$ to separate the oscillations trapped in $\mathcal{M}_{\varphi}(T, H)$, so that the Duke--Friedlander--Iwaniec circle method implies
\begin{equation*}
\mathcal{M}_{\varphi}(T, H) \approx \frac{1}{T} \sum_{c \sim \sqrt{T}} \ \sideset{}{^{\ast}} \sum_{a \tpmod{c}} 
\sum_{h \sim H} e \left(-\frac{ah}{c} \right) \left|\sum_{m \sim T} 
L \left(\frac{1}{2}, \varphi \otimes \chi_{m} \right) e \left(\frac{am}{c} \right) \right|^{2}.
\end{equation*}
By Poisson summation, the sum over $h$ transforms into
\begin{equation*}
\sum_{h \sim H} e \left(-\frac{ah}{c} \right) 
\approx H \sum_{h \sim \frac{\sqrt{T}}{H}} \delta(h \equiv a \tpmod{c}).
\end{equation*}
while the sum over $n$ transforms into (via the approximate functional equation)
\begin{equation*}
\sum_{m \sim T} L \left(\frac{1}{2}, \varphi \otimes \chi_{m} \right) e \left(\frac{am}{c} \right) 
\approx \sum_{\ell \sim T} \sum_{m \sim \sqrt{T}} \lambda_{\varphi}(\ell) \left(\frac{cm}{\ell} \right) \delta(m \equiv a \ell \tpmod{c}).
\end{equation*}
Therefore, we obtain something roughly of the shape
\begin{equation*}
\mathcal{M}_{\varphi}(T, H) \approx \frac{H}{T} \sum_{c \sim \sqrt{T}} \sum_{h \sim \frac{\sqrt{T}}{H}} 
\left|\sum_{\ell \sim T} \sum_{m \sim \sqrt{T}} \lambda_{\varphi}(\ell) \left(\frac{cm}{\ell} \right) \delta(m \equiv h \ell \tpmod{c}) \right|^{2}.
\end{equation*}
The square-root cancellation heuristic implies that the best possible bound for the right-hand side is $O_{\varphi, \epsilon}(T^{1+\epsilon})$. For the ensuing analysis, it is now convenient to introduce an~amplification parameter $1 \leq L \leq \sqrt{T}$ and elongate the sum over $c$ by $L$. Opening the square, the problem boils down to determining bounds for
\begin{equation*}
\mathcal{M}_{\varphi}(T, H) \ll \frac{1}{\sqrt{T}} \sum_{c \sim L \sqrt{T}} \sum_{\ell_{1}, \ell_{2} \sim T} \sum_{m, n \sim \sqrt{T}} \lambda_{\varphi}(\ell_{1}) \lambda_{\varphi}(\ell_{2}) \left(\frac{cm}{\ell_{1}} \right) \left(\frac{cn}{\ell_{2}} \right) \delta(\ell_{1} n \equiv \ell_{2} m \tpmod{c}).
\end{equation*}
Divisor switching then comes into play, and we write
\begin{equation*}
\ell_{1} n = \ell_{2} m+cq, \qquad c \sim L \sqrt{T}, \qquad q \sim \frac{T}{L}.
\end{equation*}
It replaces a congruence condition modulo $c$ with a congruence condition modulo $q$, achieving a huge conductor drop simultaneously in the other variables. Without an amplification, the complementary divisor $q$ would be much larger than the initial divisor $c$. Hence, there holds
\begin{equation*}
\mathcal{M}_{\varphi}(T, H) \ll \frac{1}{\sqrt{T}} \sum_{q \sim \frac{T}{L}} \sum_{\ell_{1}, \ell_{2} \sim T} \sum_{m, n \sim \sqrt{T}} \lambda_{\varphi}(\ell_{1}) \lambda_{\varphi}(\ell_{2}) \left(\frac{q}{\ell_{1} \ell_{2}} \right) \delta(\ell_{1} n \equiv \ell_{2} m \tpmod{q}).
\end{equation*}
By Poisson summation, the sum over $m$ transforms into
\begin{equation*}
\sum_{m \sim \sqrt{T}} \delta(\ell_{1} n \equiv \ell_{2} m \tpmod{q}) 
\approx \frac{L}{\sqrt{T}} \sum_{m \sim \frac{\sqrt{T}}{L}} e \left(\frac{\ell_{1} \overline{\ell_{2}} mn}{q} \right),
\end{equation*}
while the sum over $n$ transforms into
\begin{equation*}
\sum_{n \sim \sqrt{T}} e \left(\frac{\ell_{1} \overline{\ell_{2}} mn}{q} \right) 
\approx \sqrt{T} \sum_{n \sim \frac{\sqrt{T}}{L}} \delta(\ell_{1} m \equiv \ell_{2} n \tpmod{q}).
\end{equation*}
By orthogonality, one expands
\begin{equation*}
\delta(\ell_{1} m \equiv \ell_{2} n \tpmod{q}) \approx \frac{1}{q} \ \sideset{}{^{\star}} \sum_{b \tpmod{q}} e \left(\frac{b(\ell_{1} m-\ell_{2} n)}{q} \right),
\end{equation*}
where $\star$ denotes summation restricted to reduced residue classes. To handle the sums over~$\ell_{1}$ and $\ell_{2}$, note that~\cite[Proposition 3.8 (iii)]{JacquetLanglands1970} or~\cite[Theorem 3.1 (ii)]{AtkinLi1978} implies that there exists a Hecke--Maa{\ss} newform $\varphi \otimes (\frac{q}{\cdot})$ of level $q^{2}$ and trivial nebentypus such that~$\lambda_{\varphi \otimes (\frac{q}{\cdot})}(\ell) = \lambda_{\varphi}(\ell) (\frac{q}{\ell})$. Hence, by $\GL_{2}$ Vorono\u{\i} summation, the sum over $\ell_{1}$ transforms~into
\begin{equation*}
\sum_{\ell_{1} \sim T} \lambda_{\varphi \otimes (\frac{q}{\cdot})}(\ell_{1}) e \left(\frac{b \ell_{1} m}{q} \right) 
\approx L \sum_{\ell_{1} \sim \frac{T}{L^{2}}} 
\lambda_{\varphi \otimes (\frac{q}{\cdot})}(\ell_{1}) e \left(\frac{\overline{b} \ell_{1} \overline{m}}{q} \right),
\end{equation*}
while the sum over $\ell_{2}$ transforms into
\begin{equation*}
\sum_{\ell_{2} \sim T} \lambda_{\varphi \otimes (\frac{q}{\cdot})}(\ell_{2}) e \left(-\frac{b \ell_{2} n}{q} \right) 
\approx L \sum_{\ell_{2} \sim \frac{T}{L^{2}}} 
\lambda_{\varphi \otimes (\frac{q}{\cdot})}(\ell_{2}) e \left(-\frac{\overline{b} \ell_{2} \overline{n}}{q} \right),
\end{equation*}
Summing over $b \tpmod{q}$ via orthogonality yields
\begin{equation*}
\mathcal{M}_{\varphi}(T, H) \ll \frac{L^{3}}{\sqrt{T}} \sum_{q \sim \frac{T}{L}} \sum_{\ell_{1}, \ell_{2} \sim \frac{T}{L^{2}}} \sum_{m, n \sim \frac{\sqrt{T}}{L}} \lambda_{\varphi \otimes (\frac{q}{\cdot})}(\ell_{1}) \lambda_{\varphi \otimes (\frac{q}{\cdot})}(\ell_{2}) \delta(\ell_{1} n \equiv \ell_{2} m \tpmod{q}).
\end{equation*}
As an endgame, we employ the Rankin--Selberg bound for the Hecke eigenvalues and estimate everything trivially, deducing
\begin{equation*}
\mathcal{M}_{\varphi}(T, H) \ll_{\varphi, \epsilon} L^{-\frac{7}{2}} T^{3+\epsilon} = T^{\frac{5}{4}+\epsilon},
\end{equation*}
where we optimise $L = \sqrt{T}$. This finishes the sketch of the proof of Theorem~\ref{main}.


\subsection{A Road Map and Notation}
Sections~\ref{arithmetic-machinery} and~\ref{automorphic-machinery} assemble requisite tools for the proof of Theorem~\ref{main}. In Section~\ref{proof}, we prove Theorem~\ref{main} along the same lines as in Section~\ref{discussions-on-the-proof}.

Throughout the paper, we make constant use of the notation $e(x) = e^{2\pi ix}$. We use $\varepsilon > 0$ to denote an arbitrarily small positive quantity that is possibly different in each instance. The Vinogradov symbol $f \ll_{\nu} g$ or the big $O$ notation $f = O_{\nu}(g)$ indicates that there exists an effectively computable constant $c_{\nu} > 0$, depending at most on $\nu$, such that $|f(z)| \leq c_{\nu} |g(z)|$ for all $z$ in a specified range. If no parameter $\nu$ is present, then $c$ is absolute. The Kronecker symbol $\delta(\mathrm{S})$ detects $1$ or $0$ according as the statement $\mathrm{S}$ is true or not.

\subsection*{Acknowledgements}
The author is indebted to Wing Hong Leung for helpful comments.

\section{Arithmetic Toolbox}\label{arithmetic-machinery}
This section compiles the arithmetic machinery that we shall need later. In particular, we formulate a version of the circle method (due to Duke--Friedlander--Iwaniec) and the~Poisson summation formula. Some fundamental properties of quadratic characters are also presented.

\subsection{$\delta$-Symbols}
There are two oscillations contributing to the shifted convolution problem that we address. The idea is to separate these oscillations via the circle method or the delta method. One seeks for a Fourier expansion that matches the Kronecker symbol $\delta(n = 0)$.
\begin{lemma}[Leung~\cite{Leung2021,Leung2022}]\label{DeltaCor}
Let $n \in \Z$ be such that $|n| \ll N$, $q \in \N$, and let~$C > N^{\epsilon}$. Let $U \in C_{c}^{\infty}(\R)$ and $W \in C_{c}^{\infty}([-2, -1] \cup [1, 2])$ be nonnegative even functions such that $U(x) = 1$ for $x \in [-2, 2]$. Then we have that
\begin{equation*}
\delta(n = 0) = \frac{1}{\mathcal{C}} \sum_{c = 1}^{\infty} \frac{1}{cq} 
\sum_{a \tpmod{cq}} e \left(\frac{an}{cq} \right) V_{0} \left(\frac{c}{C}, \frac{n}{cCq} \right),
\end{equation*}
where
\begin{equation*}
\mathcal{C} \coloneqq \sum_{c = 1}^{\infty} W \left(\frac{c}{C} \right) \sim C,
\end{equation*}
and
\begin{equation*}
V_{0}(x, y) \coloneqq W(x) U(x) U(y)-W(y) U(x) U(y)
\end{equation*}
is a smooth function satisfying $V_{0}(x, y) \ll \delta(|x|, |y| \ll 1)$.
\end{lemma}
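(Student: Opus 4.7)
The plan is a direct verification. First, I would apply orthogonality of additive characters to the inner $a$-sum,
$$\frac{1}{cq} \sum_{a \tpmod{cq}} e\left(\frac{an}{cq}\right) = \delta(cq \mid n),$$
which collapses the right-hand side of the identity to
$$\frac{1}{\mathcal{C}} \sum_{\substack{c \geq 1 \\ cq \mid n}} V_0\left(\frac{c}{C},\, \frac{n}{cCq}\right).$$
It then remains to show that this expression equals $\delta(n = 0)$, and I would split according as $n = 0$ or $n \neq 0$.

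The case $n = 0$ is immediate: every $c$ satisfies $cq \mid 0$, and at $y = 0$ one computes $V_0(c/C, 0) = U(c/C) U(0) (W(c/C) - W(0)) = W(c/C)$, using $W(0) = 0$ (since $W$ is supported away from the origin), $U(0) = 1$, and $U \equiv 1$ on $\mathrm{supp}(W) \subset [-2, 2]$. Summing over $c$ recovers $\mathcal{C}$, and the normalising prefactor yields $\mathcal{C}/\mathcal{C} = 1$, matching $\delta(0 = 0)$.

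The substantive case is $n \neq 0$, where the mechanism is the antisymmetry
$$V_0(x, y) = U(x) U(y) (W(x) - W(y)), \qquad V_0(y, x) = -V_0(x, y),$$
combined with the evenness of $U$ and $W$, which furnishes $V_0(\pm x, \pm y) = V_0(x, y)$ in each variable separately. If $q \nmid n$ the sum is empty. Otherwise set $M = |n/q|$; by the evenness just noted, the sum reduces to $\sum_{c \mid M} V_0(c/C, M/(cC))$. The involution $c \leftrightarrow M/c$ on positive divisors of $M$, combined with antisymmetry, forces this sum to equal its own negative, and hence to vanish.

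I do not anticipate any serious obstacle --- the identity is essentially formal, and the only mild subtlety is tracking the sign of $n$, which dissolves through the evenness of the cutoffs. The hypotheses $|n| \ll N$ and $C > N^{\epsilon}$ play no role in the identity itself; rather, they serve in downstream applications to guarantee that the effective range of summation $c \asymp C$ (dictated by the support of $V_0$ in its first argument) is compatible with the sizes at hand and that the outer $c$-sum is morally finite.
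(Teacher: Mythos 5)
The paper does not supply its own proof of this lemma; it cites Leung and remarks that it is equivalent (via Heath-Brown's Theorem~1) to the Duke--Friedlander--Iwaniec $\delta$-method with the simpler weight~$V_0$. Your direct verification is correct and is, in essence, the mechanism underlying all of these references: orthogonality reduces the right-hand side to the divisor sum $\sum_{cq \mid n} V_0(c/C,\, n/(cCq))$; when $n=0$ the telescoping factorisation $V_0(x,0)=U(x)U(0)\bigl(W(x)-W(0)\bigr)=W(x)$ (using $W(0)=0$, $U(0)=1$, and $U\equiv 1$ on $\operatorname{supp}W$) recovers $\mathcal{C}/\mathcal{C}=1$; and when $n\neq 0$ the antisymmetry $V_0(y,x)=-V_0(x,y)$ together with evenness in each slot and the divisor involution $c\leftrightarrow M/c$ (with $M=|n|/q$, and the fixed point $c=\sqrt{M}$ contributing $V_0(x,x)=0$) kills the sum. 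This is exactly the combinatorial skeleton of Heath-Brown's identity $\delta(n)=\sum_{d\mid n}\bigl(\omega(d)-\omega(|n|/d)\bigr)$ for $n\neq 0$, so you have not taken a genuinely different route so much as unpacked the cited one. Your closing observation is also accurate: the hypotheses $|n|\ll N$ and $C>N^\epsilon$ are irrelevant to the exact identity and only matter for the downstream estimate $\mathcal{C}\sim C$ and for keeping the second argument $n/(cCq)$ in a useful range.
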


By~\cite[Theorem 1]{HeathBrown1996}, Lemma~\ref{DeltaCor} is equivalent to the Duke--Friedlander--Iwaniec~\cite{DukeFriedlanderIwaniec1994} circle method with a simpler weight function $V_{0}$ that constrains $|n| \ll cCQ$. This particular feature is beneficial in the proof of Theorem~\ref{main}. See~\cite{KanekoLeung2023,Leung2021,Leung2022} for further details.

\subsection{Poisson Summation}\label{Poisson-summation}
For $n \in \mathbb{N}$ and an integrable function $w \colon \mathbb{R}^{n} \to \mathbb{C}$, denote its Fourier transform by
\begin{equation*}
\widehat{w}(y) \coloneqq \int_{\R^{n}} w(x) e(-\langle x, y \rangle) dx,
\end{equation*}
where $\langle \cdot, \cdot \rangle$ stands for the standard inner product on $\mathbb{R}^{n}$. Moreover, if $c \in \N$ and $K \colon \mathbb{Z} \to \mathbb{C}$ is a periodic function of period $c$, then its Fourier transform $\widehat{K}$ is again the periodic function of period $c$:
\begin{equation*}
\widehat{K}(n) \coloneqq \sum_{a \tpmod{c}} K(a) e \left(-\frac{an}{c} \right).
\end{equation*}
Note that there is a minor inconsistency in sign choices, namely $\widehat{\widehat{K}}(n) = K(-n)$ for all $n \in \Z$.

We invoke a form of the Poisson summation formula with a $c$-periodic function involved.
\begin{lemma}[{Fouvry--Kowalski--Michel~\cite[Lemma~2.1]{FouvryKowalskiMichel2015}}]\label{Fouvry-Kowalski-Michel}
For any $c \in \mathbb{N}$, any $c$-periodic function $K$, and any even smooth function $V$ compactly supported on $\mathbb{R}$, we have that
\begin{equation*}
\sum_{n = 1}^{\infty} K(n) V(n) = \frac{1}{c} \sum_{n \in \mathbb{Z}} \widehat{K}(n) \widehat{V} \left(\frac{n}{c} \right).
\end{equation*}
\end{lemma}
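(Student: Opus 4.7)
The plan is to reduce the identity to the classical Poisson summation formula on $\mathbb{R}$, using the $c$-periodicity of $K$ to collapse the arithmetic information into a Fourier transform on $\Z/c\Z$. First, I would extend and interpret the summation bilaterally: since $V$ is even and compactly supported (and, in the applications of interest, vanishes at $0$), the sum $\sum_{n=1}^{\infty} K(n) V(n)$ is handled on the same footing as its bilateral version, because the right-hand side is manifestly symmetric under $n \to -n$ after invoking the evenness of $\widehat{V}$ (which follows from that of $V$).

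Next, I would partition $\Z$ into residue classes modulo $c$: writing every integer uniquely as $n = a + mc$ with $a \in \{0, 1, \dots, c-1\}$ and $m \in \Z$, and using $c$-periodicity $K(a + mc) = K(a)$, I obtain
\begin{equation*}
\sum_{n \in \Z} K(n) V(n) = \sum_{a \tpmod{c}} K(a) \sum_{m \in \Z} V(a + mc).
\end{equation*}
The inner sum over $m$ is now amenable to the classical Poisson summation formula applied to the Schwartz function $m \mapsto V(a + mc)$. A direct change of variables gives the Fourier transform
\begin{equation*}
\int_{\R} V(a + mc) e(-mk) dm = \frac{1}{c} \widehat{V}\!\left(\frac{k}{c}\right) e\!\left(\frac{ak}{c}\right),
\end{equation*}
and hence
\begin{equation*}
\sum_{m \in \Z} V(a + mc) = \frac{1}{c} \sum_{k \in \Z} \widehat{V}\!\left(\frac{k}{c}\right) e\!\left(\frac{ak}{c}\right).
\end{equation*}

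Substituting this back and interchanging the two finite/absolutely convergent sums — justified because $\widehat{V}$ is Schwartz and the $a$-sum is finite — I get
\begin{equation*}
\sum_{n \in \Z} K(n) V(n) = \frac{1}{c} \sum_{k \in \Z} \widehat{V}\!\left(\frac{k}{c}\right) \sum_{a \tpmod{c}} K(a) e\!\left(\frac{ak}{c}\right) = \frac{1}{c} \sum_{k \in \Z} \widehat{V}\!\left(\frac{k}{c}\right) \widehat{K}(-k),
\end{equation*}
where the inner $a$-sum is $\widehat{K}(-k)$ in the convention fixed in the excerpt. A change of summation variable $k \to -k$, combined with the evenness of $\widehat{V}$, yields the stated identity. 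There is no genuine obstacle here — the argument is entirely formal Fourier analysis — so the main subtleties are purely bookkeeping: keeping the two different Fourier conventions (for functions on $\R$ and for $c$-periodic functions on $\Z$) coherent, and using the evenness of $V$ to reconcile the unilateral left-hand side with the bilateral Poisson identity.
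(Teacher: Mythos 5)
Your core computation is exactly the standard proof of the cited lemma (note the paper itself offers no proof; it simply quotes Fouvry--Kowalski--Michel): split $\Z$ into residue classes $n = a + mc$, apply classical Poisson summation to the Schwartz function $m \mapsto V(a+mc)$, and recognise $\sum_{a \bmod c} K(a) e(ak/c) = \widehat{K}(-k)$. All of this, including the normalisation $\int_{\R} V(a+xc)e(-kx)\,dx = \tfrac{1}{c}\widehat{V}(k/c)e(ak/c)$, the interchange of sums, and the final substitution $k \to -k$ using the evenness of $\widehat{V}$, is correct, and it establishes the bilateral identity $\sum_{n\in\Z} K(n)V(n) = \tfrac1c \sum_{n\in\Z}\widehat K(n)\widehat V(n/c)$.

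The one genuine flaw is your reduction of the stated one-sided sum $\sum_{n\ge1}$ to the bilateral one. The right-hand side is \emph{not} manifestly symmetric under $n\to -n$: the summand is $\widehat K(n)\widehat V(n/c)$, and evenness of $\widehat V$ does not give $\widehat K(-n)=\widehat K(n)$; that would require $K$ itself to be even modulo $c$. For general $K$ the bilateral sum equals $K(0)V(0) + \sum_{n\ge1}\bigl(K(n)+K(-n)\bigr)V(n)$, which genuinely differs from the unilateral left-hand side: take $c=3$, $K(n)=e(n/3)$, and $V$ even with $V(\pm1)=1$, $V(0)=V(\pm2)=0$; then the one-sided sum is $e(1/3)$ while the right-hand side evaluates to $e(1/3)+e(-1/3)=-1$. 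So what you have proved is the lemma in the form in which it is actually used in the paper --- where the weight functions are supported on positive values, so the terms with $n\le 0$ vanish and unilateral equals bilateral --- equivalently, the original FKM statement with $\sum_{n\in\Z}$ on the left. The sentence invoking symmetry of the right-hand side should be replaced by this support observation (or by an evenness hypothesis on $K$), since as written that step is false; with that repair the argument is complete.
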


\subsection{Quadratic Characters}
We adhere to the notation of~\cite{Blomer2011,DiaconuGoldfeldHoffstein2003}. Let $d$ and~$n$~be odd positive integers that we factorise uniquely as $d = d_{0} d_{1}^{2}$ with $d_{0}$ squarefree and $n = n_{0} n_{1}^{2}$ with $n_{0}$ squarefree. Define the Jacobi--Kronecker symbol by
\begin{equation*}
\left(\frac{d}{n} \right) \coloneqq \prod_{p^{v} \parallel n} \left(\frac{d}{p} \right)^{v},
\end{equation*}
where for an odd prime $p$, we denote by $(\frac{d}{p})$ the standard Legendre symbol. Then the symbol $(\frac{d}{n})$ is extended to all odd $n \in \Z$ (cf.~\cite[p.442]{Shimura1973} and~\cite[p.147, 187--188]{Koblitz1984}). We write
\begin{equation*}
\chi_{d}(n) \coloneqq \left(\frac{d}{n} \right) \eqqcolon \tilde{\chi}_{n}(d).
\end{equation*}
The character $\chi_{d}$ is the Jacobi--Kronecker symbol of conductor $d_{0}$ if $d \equiv 1 \tpmod{4}$ and $4d_{0}$ if $d \equiv 3 \tpmod{4}$. By definition, we know
\begin{equation*}
\chi_{d}(2) = 
    \begin{cases}
    1 & \text{if $d \equiv 1 \tpmod{8}$},\\
    -1 & \text{if $d \equiv 5 \tpmod{8}$},\\
    0 & \text{if $d \equiv 3 \tpmod{4}$},
    \end{cases}
\end{equation*}
and $\chi_{d}(-1) = 1$, namely $\chi_{d}$ is even. Quadratic reciprocity~\cite[Theorem~3.5]{IwaniecKowalski2004} states that for relatively prime odd positive integers $d$ and $n$,
\begin{equation}\label{eq:quadratic-reciprocity}
\bigg(\frac{d}{n} \bigg) \bigg(\frac{n}{d} \bigg) = (-1)^{\frac{(d-1)(n-1)}{4}}.
\end{equation}
This implies in particular that
\begin{equation*}
\tilde{\chi}_{n} = 
    \begin{cases}
    \chi_{n} & \text{if $n \equiv 1 \tpmod{4}$},\\
    \chi_{-n} & \text{if $n \equiv 3 \tpmod{4}$}.
    \end{cases}
\end{equation*}

\subsection{Gau{\ss} Sums}
For a Dirichlet character $\chi \tpmod{c}$, orthogonality asserts
\begin{equation}\label{orthogonality-of-Dirichlet-characters}
\sum_{a \tpmod c} \chi(a) = 
	\begin{cases}
	\varphi(c) & \text{if $\chi = \chi_{0}$},\\
	0 & \text{otherwise},
	\end{cases}
\qquad \sum_{\chi \tpmod c} \chi(a) = 
	\begin{cases}
	\varphi(c) & \text{if $a \equiv 1 \tpmod{c}$},\\
	0 & \text{otherwise}.
	\end{cases}
\end{equation}
Given $h \in \Z$, we define the Gau{\ss} sum associated to $\chi$ by
\begin{equation}\label{Gauss-sum}
\tau(\chi, h) \coloneqq \sum_{b \tpmod c} \chi(b) e \left(\frac{bh}{c} \right).
\end{equation}
We write $\tau(\chi) \coloneqq \tau(\chi, 1)$. Multiplying~\eqref{Gauss-sum} by $\overline{\chi}(a)$ and summing over $\chi$, we derive from~\eqref{orthogonality-of-Dirichlet-characters}
\begin{equation*}
e \left(\frac{ah}{c} \right) = \frac{1}{\varphi(c)} \sum_{\chi \tpmod{c}} \overline{\chi}(a) \tau(\chi, h), \qquad (a, c) = 1.
\end{equation*}
This serves as a Fourier expansion of additive characters in terms of the multiplicative ones.

When $\chi$ is quadratic and $d$ is a positive odd squarefree integer, the Gau{\ss} sum simplifies~to
\begin{equation*}
\tau \left(\left(\frac{\cdot}{d} \right) \right) = \epsilon_{d} \sqrt{d},
\end{equation*}
where
\begin{equation*}
\epsilon_{d} = 
    \begin{cases}
    1 & \text{if $d \equiv 1 \tpmod{4}$},\\
    i & \text{if $d \equiv 3 \tpmod{4}$}.
    \end{cases}
\end{equation*}
It is straightforward to verify that the right-hand side of~\eqref{eq:quadratic-reciprocity} is equal to $\epsilon_{d} \epsilon_{n} \epsilon_{dn}^{-1}$.

\subsection{The Gamma Function}\label{the-Gamma-function}
For fixed $\sigma \in \R$, real $|\tau| \geq 3$, and any $M > 0$, we make use of Stirling's formula
\begin{equation}\label{eq:Stirling}
\Gamma(\sigma+i\tau) = e^{-\frac{\pi|\tau|}{2}} |\tau|^{\sigma-\frac{1}{2}} \exp \left(i\tau \log \frac{|\tau|}{e} \right) g_{\sigma, M}(\tau)+O_{\sigma, M}(|\tau|^{-M}),
\end{equation}
where
\begin{equation*}
g_{\sigma, M}(\tau) = \sqrt{2\pi} \exp \left(\frac{\pi}{4}(2\sigma-1)i \operatorname{sgn}(\tau) \right)+O_{\sigma, M}(|\tau|^{-1}),
\end{equation*}
and
\begin{equation*}
|\tau|^{j} g_{\sigma, M}^{(j)}(\tau) \ll_{j, \sigma, M} 1
\end{equation*}
for all fixed $j \in \N_{0}$.

\section{Automorphic Toolbox}\label{automorphic-machinery}
This section reviews the automorphic machinery to be considered in the rest of the paper. In particular, we define automorphic $L$-functions and their quadratic twists, followed by the approximate functional equation. The Vorono\u{\i} summation formula for twists is also shown.

\subsection{Automorphic Forms}
Let $\{\varphi \}$ be an orthonormal basis of Hecke--Maa{\ss} cusp forms~on the modular surface $\SL_{2}(\Z) \backslash \mathbb{H}$. We can assume without loss of generality that all $\varphi$ are real-valued. Denote by $t_{\varphi} > 1$ the spectral parameter, and by $\lambda_{\varphi}(n)$ the $n$-th Fourier coefficient. Given $t \in \R$, let $E(z, \frac{1}{2}+it)$ be the unitary Eisenstein series whose $n$-th Fourier coefficient is $\lambda(n, t) \coloneqq \sum_{ab = |n|} (\frac{a}{b})^{it}$. Let $\vartheta$ be an admissible exponent towards the Ramanujan--Petersson conjecture. At the current state of knowledge, $\vartheta \leq \frac{7}{64}$ is known; see Kim--Sarnak~\cite{Kim2003}. Nonetheless, the Ramanujan--Petersson conjecture holds \emph{on average} in the following form.
\begin{lemma}[{Rankin--Selberg bound~\cite[Lemma~1]{Iwaniec1992}}]\label{lem:Rankin-Selberg}
Keep the notation as above. Then we have for any~$\epsilon > 0$ that
\begin{equation*}
\sum_{n \leq N} |\lambda_{\varphi}(n)|^{2} \ll_{\epsilon} t_{\varphi}^{\epsilon} N.
\end{equation*}
\end{lemma}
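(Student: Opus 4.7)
The natural framework is the Rankin--Selberg method. Starting from the Dirichlet series identity
\begin{equation*}
\sum_{n=1}^{\infty} \frac{|\lambda_{\varphi}(n)|^{2}}{n^{s}} = \frac{\zeta(s) L(s, \mathrm{sym}^{2} \varphi)}{\zeta(2s)}, \qquad \Re(s) > 1,
\end{equation*}
which follows from Hecke multiplicativity together with the factorisation $L(s, \varphi \times \varphi) = \zeta(s) L(s, \mathrm{sym}^{2} \varphi)$ and the entirety of the symmetric square (Shimura, Gelbart--Jacquet), the left-hand side extends meromorphically to $\C$ with a single simple pole at $s = 1$ of residue $L(1, \mathrm{sym}^{2} \varphi)/\zeta(2)$.

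The plan is to smooth the sharp cutoff and apply Mellin inversion. I would pick a non-negative $W \in C_{c}^{\infty}(\R_{>0})$ majorising the indicator function of $[0, 1]$, whose Mellin transform $\tilde{W}$ decays rapidly in vertical strips. Then
\begin{equation*}
\sum_{n \leq N} |\lambda_{\varphi}(n)|^{2} \leq \sum_{n = 1}^{\infty} |\lambda_{\varphi}(n)|^{2} W(n/N) = \frac{1}{2\pi i} \int_{(2)} \frac{\zeta(s) L(s, \mathrm{sym}^{2} \varphi)}{\zeta(2s)} \tilde{W}(s) N^{s} \, ds.
\end{equation*}
Shifting the contour to $\Re(s) = \sigma_{0}$ for a small $\sigma_{0} > 0$ and collecting the residue at $s = 1$ produces a main term of size $O_{\epsilon}(t_{\varphi}^{\epsilon} N)$, where the Hoffstein--Lockhart--Ramakrishnan upper bound $L(1, \mathrm{sym}^{2} \varphi) \ll_{\epsilon} t_{\varphi}^{\epsilon}$ controls the residue.

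The remaining contour integral on $\Re(s) = \sigma_{0}$ would be estimated via the convexity bound for the degree-three $L$-function $L(s, \mathrm{sym}^{2} \varphi)$, whose archimedean analytic conductor is of size $t_{\varphi}^{3}$, giving $L(\sigma_{0} + it, \mathrm{sym}^{2} \varphi) \ll (t_{\varphi}(1 + |t|))^{3(1 - \sigma_{0})/2 + \epsilon}$. Combined with the rapid decay of $\tilde{W}$ and standard bounds on $\zeta$ near the edge of the critical strip, this yields a remainder of acceptable size once $\sigma_{0}$ is chosen sufficiently close to $1$.

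The main technical obstacle is securing uniformity in the spectral parameter $t_{\varphi}$: this forces careful tracking of archimedean Gamma factors in the Rankin--Selberg and symmetric square $L$-functions along with appeal to the Hoffstein--Lockhart polynomial bounds on the edge of the critical strip. With these inputs in place, the desired estimate follows.
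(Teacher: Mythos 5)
The paper does not prove this lemma; it is quoted verbatim from Iwaniec's 1992 paper, so there is no in-text argument to compare against. Your blind derivation via the Rankin--Selberg method is the standard modern route and is essentially correct in outline: the Dirichlet series identity, the meromorphic continuation with a simple pole at $s=1$, Mellin inversion with a smooth majorant, contour shifting, and the residue controlled by $L(1,\mathrm{sym}^{2}\varphi) \ll_\epsilon t_\varphi^\epsilon$ are all sound ingredients. Two points deserve tightening. First, you write ``shifting the contour to $\Re(s)=\sigma_0$ for a small $\sigma_0>0$'' but then conclude ``once $\sigma_0$ is chosen sufficiently close to $1$''; only the latter is workable, since shifting far into the strip (i) risks crossing potential zeros of $\zeta(2s)$ for $\sigma_0<\tfrac12$, and (ii) blows up the conductor exponent in the Phragm\'en--Lindel\"of bound. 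Second, to get the claimed $t_\varphi^\epsilon$-uniformity for \emph{all} $N\geq 1$ (not just $N\gg t_\varphi^{3/2}$), the shift must be made explicitly tiny, e.g.\ $\sigma_0=1-c/\log(3+t_\varphi)$: then $N^{\sigma_0}\leq N$, the convexity bound contributes $((1+|t|)(1+t_\varphi))^{O(1/\log t_\varphi)+\epsilon}\ll t_\varphi^\epsilon(1+|t|)^{O(\epsilon)}$, and the factor $\zeta(\sigma_0+it)\ll \min(1/|\sigma_0-1+it|,\log(2+|t|))\ll \log(3+t_\varphi)$ near $t=0$ is absorbed into $t_\varphi^\epsilon$. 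You gesture at this (``careful tracking of archimedean Gamma factors'') but do not make the choice of $\sigma_0$ explicit, and without it the remainder estimate $N^{1/2}t_\varphi^{3/4+\epsilon}$ from a na\"ive shift to $\Re(s)=\tfrac12$ fails to be $\ll t_\varphi^\epsilon N$ when $N$ is small relative to $t_\varphi$. One can also dispense with convexity entirely, as Iwaniec does, by exploiting nonnegativity of the Rankin--Selberg coefficients together with a clean test-function argument; that route sidesteps the uniformity bookkeeping and is closer in spirit to the cited source, but your $L$-function proof, once the contour choice is pinned down, is perfectly serviceable for the paper's purposes (where $\varphi$ is fixed anyway).
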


The Fourier coefficients $\lambda_{\varphi}(n)$ also obey the Hecke multiplicativity relation
\begin{equation}\label{eq:Hecke}
\lambda_{\varphi}(mn) = \sum_{d \mid (m, n)} \mu(d) \lambda_{\varphi} \left(\frac{m}{d} \right) \lambda_{\varphi} \left(\frac{n}{d} \right), \qquad m, n \in \N.
\end{equation}

\subsection{$L$-Functions}\label{GL(2)-l-functions}
Let $\varphi$ be a Hecke--Maa{\ss} cusp form on $\SL_{2}(\Z) \backslash \mathbb{H}$ of Laplacian eigenvalue $\frac{1}{4}+t_{\varphi}^{2} \geq 0$. Let $\lambda_{\varphi}(n)$ be its $n$-th Fourier coefficient. Then the $L$-function associated to $\varphi$ is given by
\begin{equation*}
L(s, \varphi) \coloneqq \sum_{n = 1}^{\infty} \frac{\lambda_{\varphi}(n)}{n^{s}} = \prod_{p} \left(1-\frac{\lambda_{\varphi}(p)}{p^{s}}+\frac{1}{p^{2s}} \right)^{-1},
\end{equation*}
which converges absolutely for $\Re(s) > 1$, extends to the whole complex plane $\C$, and satisfies the functional equation
\begin{equation*}
\Lambda(s, \varphi) \coloneqq \pi^{-s} \Gamma \left(\frac{s+\kappa+it_{\varphi}}{2} \right) \Gamma \left(\frac{s+\kappa-it_{\varphi}}{2} \right) L(s, \varphi)
 = \epsilon(\varphi) \Lambda(1-s, \varphi),
\end{equation*}
where $\epsilon(\varphi)$ stands for the root number of modulus $1$, and
\begin{equation*}
\kappa = 
    \begin{cases}
    0 & \text{if $\epsilon(\varphi) = 1$},\\
    1 & \text{if $\epsilon(\varphi) = -1$}.
    \end{cases}
\end{equation*}
Furthermore, $L(s, \varphi) = \zeta(s)^{2}$ if $\varphi$ is Eisenstein.

\subsection{Quadratic Twists}
With the notation as above, the quadratic twist $\varphi \otimes \chi_{d}$ becomes a Hecke--Maa{\ss} newform of level $|d|^{2}$ whose $L$-function can be expressed in terms of a Dirichlet series and an Euler product, each converging absolutely for $\Re(s) > 1$:
\begin{equation*}
L(s, \varphi \otimes \chi_{d}) \coloneqq \sum_{n = 1}^{\infty} \frac{\lambda_{\varphi}(n) \chi_{d}(n)}{n^{s}} = \prod_{p} \left(1-\frac{\lambda_{\varphi}(p) \chi_{d}(p)}{p^{s}}+\frac{\chi_{d}(p)^{2}}{p^{2s}} \right)^{-1}.
\end{equation*}
It extends to the whole complex plane $\C$ and satisfies the functional equation
\begin{align*}
\Lambda(s, \varphi \otimes \chi_{d}) &\coloneqq \left(\frac{|d|}{\pi} \right)^{s} \Gamma \left(\frac{s+\kappa+it_{\varphi}}{2} \right) \Gamma \left(\frac{s+\kappa-it_{\varphi}}{2} \right) L(s, \varphi \otimes \chi_{d})\\
& = \epsilon(\varphi \otimes \chi_{d}) \Lambda(1-s, \varphi \otimes \chi_{d}),
\end{align*}
where $\epsilon(\varphi \otimes \chi_{d}) = \epsilon(\varphi) \epsilon(d)$ with $\epsilon(d) = (\frac{d}{-1}) = \pm 1$ depending on the sign of $d$. Furthermore, $L(s, \varphi \otimes \chi_{d}) = L(s, \chi_{d})^{2}$ if $\varphi$ is Eisenstein.

\subsection{The Approximate Functional Equation}\label{approximate-functional-equations}
We record a version of the approximate functional equation due to Iwaniec--Kowalski~\cite[Theorem~5.3]{IwaniecKowalski2004} applied to $L(\frac{1}{2}, \varphi \otimes \chi_{d})$.
\begin{lemma}[{Iwaniec--Kowalski~\cite[Theorem~5.3]{IwaniecKowalski2004}}]\label{Iwaniec-Kowalski}
Let $G(u)$ be any function that is even, holomorphic and bounded in the horizontal strip $-4 < \Re(u) < 4$, and normalised such that $G(0) = 1$. Then we have that
\begin{equation*}
L \left(\frac{1}{2}, \varphi \otimes \chi_{d} \right) = (1+\epsilon(\varphi \otimes \chi_{d})) \sum_{n = 1}^{\infty} \frac{\lambda_{\varphi}(n) \chi_{d}(n)}{\sqrt{n}} W \left(\frac{n}{|d|} \right)+O(|d|^{-2023}),
\end{equation*}
where for any $c > 1$,
\begin{equation}\label{eq:V}
W(y) \coloneqq \frac{1}{2\pi i} \int_{(c)} (\pi y)^{-u} G(u) \frac{\Gamma(\frac{s+u+\kappa+it_{\varphi}}{2}) \Gamma(\frac{s+u+\kappa-it_{\varphi}}{2})}{\Gamma(\frac{s+\kappa+it_{\varphi}}{2}) \Gamma(\frac{s+\kappa-it_{\varphi}}{2})} \frac{du}{u}.
\end{equation}
\end{lemma}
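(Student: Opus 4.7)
The plan is to carry out the standard Mellin--Barnes contour-shift derivation of the approximate functional equation, specialised to the central value $s = 1/2$, where the self-duality of $L(s, \varphi \otimes \chi_d)$ collapses the two expected dual sums into a single one.

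First I would form the completed $L$-function $\Lambda(s, \varphi \otimes \chi_d) = \gamma(s) L(s, \varphi \otimes \chi_d)$ with $\gamma(s) \coloneqq (|d|/\pi)^s \Gamma(\tfrac{s+\kappa+it_\varphi}{2}) \Gamma(\tfrac{s+\kappa-it_\varphi}{2})$, and consider
\begin{equation*}
I \coloneqq \frac{1}{2\pi i} \int_{(c)} \Lambda \left(\tfrac{1}{2}+u, \varphi \otimes \chi_d \right) G(u) \frac{du}{u}, \qquad c > 1.
\end{equation*}
Since $\Re(1/2+u) > 1$ on this line, the Dirichlet series for $L(1/2+u, \varphi \otimes \chi_d)$ converges absolutely; I would unfold it term by term, pull out $\gamma(1/2) n^{-1/2}$ from $\gamma(1/2+u) n^{-1/2-u}$, and recognise the remaining Mellin kernel in $u$ as precisely $W(n/|d|)$ from~\eqref{eq:V}. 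This identifies $I = \gamma(1/2) \sum_n \lambda_\varphi(n)\chi_d(n) n^{-1/2} W(n/|d|)$.

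Next I would shift the line from $\Re(u) = c$ to $\Re(u) = -c$. The only pole crossed is the simple pole of $G(u)/u$ at $u = 0$, whose residue is $\Lambda(1/2, \varphi \otimes \chi_d)$ since $G(0) = 1$ and $\Lambda$ is entire in this setting. In the shifted integral, substituting $u \mapsto -u$, invoking the evenness of $G$, and applying the functional equation converts $\Lambda(1/2 - u, \varphi \otimes \chi_d)$ into $\epsilon(\varphi \otimes \chi_d) \Lambda(1/2 + u, \varphi \otimes \chi_d)$; the key payoff at the self-dual point $s = 1/2$ is that the dual gamma factor and the dual weight coincide with $\gamma(1/2)$ and $W$, so the shifted integral collapses to $-\epsilon(\varphi \otimes \chi_d) I$. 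Rearranging yields $(1+\epsilon(\varphi \otimes \chi_d)) I = \Lambda(1/2, \varphi \otimes \chi_d)$, and dividing by $\gamma(1/2)$ gives the stated identity.

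The main technical point -- rather than a genuine obstacle -- will be justifying the contour shift, i.e., verifying that the horizontal pieces vanish in the limit. This is routine: $G$ is bounded in the strip $|\Re(u)| < 4$ by hypothesis, and the exponential decay of $\Gamma(\tfrac{1/2 + u + \kappa \pm it_\varphi}{2})$ on horizontal lines supplied by Stirling~\eqref{eq:Stirling} dominates the polynomial growth of $L(1/2+u, \varphi \otimes \chi_d)$ (via a convexity bound), so the integrand is integrable on every vertical line in the strip. The $O(|d|^{-2023})$ term in the statement is merely a safety margin absorbing an effective truncation of the tail, which is permissible because pushing the line in~\eqref{eq:V} far to the right and applying Stirling again yields $W(y) \ll_A y^{-A}$ for every $A > 0$.
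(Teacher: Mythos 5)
The paper does not prove this lemma; it simply cites Iwaniec--Kowalski~\cite[Theorem~5.3]{IwaniecKowalski2004}, so there is no in-paper proof to compare against. Your sketch is the standard Mellin--Barnes contour-shift derivation specialised to the self-dual central point, and it is correct: unfolding $\Lambda(\tfrac12+u,\varphi\otimes\chi_d)$ against $G(u)\,du/u$ on $\Re(u)=c>1$ gives $\gamma(\tfrac12)\sum_n \lambda_\varphi(n)\chi_d(n)n^{-1/2}W(n/|d|)$; shifting to $\Re(u)=-c$ picks up the residue $\Lambda(\tfrac12)$; and the substitution $u\mapsto -u$ together with the evenness of $G$, the functional equation, and the self-duality at $s=\tfrac12$ turns the shifted integral into $-\epsilon(\varphi\otimes\chi_d)\cdot I$, yielding $(1+\epsilon(\varphi\otimes\chi_d))I=\Lambda(\tfrac12)$. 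Two small observations: the variable $s$ in the displayed definition of $W$ should of course be read as $s=\tfrac12$ (a typo in the statement which your derivation implicitly corrects), and the identity you derive is actually exact, with the $O(|d|^{-2023})$ merely reflecting, as you note, the negligible tail after the rapid decay of $W$ lets one truncate the $n$-sum at $n\ll |d|^{1+\varepsilon}$.
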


Note that $W(y)$ decays rapidly as $y \to \infty$ by taking $c$ suitably large in the definition~\eqref{eq:V} and then using Stirling's formula~\eqref{eq:Stirling}. Since we are only interested in positive fundamental discriminants $d$, we assume without loss of generality that $\epsilon(\varphi) = 1$, namely that $\varphi$ is even, because the central $L$-value vanishes otherwise.

\subsection{Vorono\u{\i} Summation}\label{subsec:Voronoi}
In conjunction with Poisson summation in Section~\ref{Poisson-summation}, one of the key ingredients in the proof of Theorem~\ref{main} is Vorono\u{\i} summation for Hecke--Maa{\ss} cusp forms on $\SL_{2}(\Z) \backslash \mathbb{H}$, which is thought of as applying Poisson summation (Lemma~\ref{Poisson-summation})~twice. To enable subsequent discussions, we need some notation. Let $V: (0, \infty) \to \C$ be a smooth function with compact support. Define the Hankel transform of $V$ by
\begin{equation*}
\mathring{V}_{\varphi}^{\pm}(y) \coloneqq \int_{0}^{\infty} V(x) J_{\varphi}^{\pm}(4\pi \sqrt{xy}) dx,
\end{equation*}
where
\begin{equation}\label{eq:J-phi}
J_{\varphi}^{+}(x) \coloneqq -\frac{\pi}{\cosh(\pi t_{\varphi})}(Y_{2it_{\varphi}}(x)+Y_{-2it_{\varphi}}(x)), \qquad 
J_{\varphi}^{-}(x) \coloneqq 4\epsilon(\varphi) \cosh(\pi t_{\varphi}) K_{2it_{\varphi}}(x).
\end{equation}
It is straightforward to confirm that $\mathring{V}$ is a Schwartz function (cf.~\cite{GradshteynRyzhik2007}).

We are now ready to formulate the Vorono\u{\i} summation formula; see~\cite[Proposition~2]{BlomerHarcos2012}.
\begin{lemma}[Vorono\u{\i} summation]\label{lem:Voronoi}
Let $c \in \N$ and $d \in \Z$ with $(c, d) = 1$. Let $V: (0, \infty) \to \C$ be a smooth function with compact support. Then we have for $N > 0$ that
\begin{equation*}
\sum_{n} \lambda_{\varphi}(n) e \left(\frac{dn}{c} \right) V \left(\frac{n}{N} \right)
 = \frac{N}{c} \sum_{n} \sum_{\pm} \lambda_{\varphi}(n) e \left(\mp \frac{\overline{d} n}{c} \right) \mathring{V}_{\varphi}^{\pm} \left(\frac{n}{c^{2}/N} \right).
\end{equation*}
\end{lemma}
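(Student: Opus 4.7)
The plan is to establish the identity via Mellin inversion applied to $V$, followed by the functional equation for the additive twist
$$L_{d/c}(s,\varphi) \coloneqq \sum_{n=1}^{\infty} \frac{\lambda_{\varphi}(n) e(dn/c)}{n^{s}}.$$
First I would write $V(x) = \frac{1}{2\pi i}\int_{(\sigma)} \tilde{V}(s) x^{-s}\,ds$ where $\tilde{V}(s) \coloneqq \int_{0}^{\infty} V(x) x^{s-1}\,dx$ is the Mellin transform, chosen for $\sigma$ sufficiently large so that the Dirichlet series converges absolutely. Interchanging sum and integral—justified by the rapid decay of $\tilde{V}$ on vertical lines, a consequence of the compact support of $V$ and repeated integration by parts—converts the left-hand side of Lemma~\ref{lem:Voronoi} into
$$\frac{1}{2\pi i}\int_{(\sigma)} \tilde{V}(s)\,N^{s}\,L_{d/c}(s,\varphi)\,ds.$$

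Next I would shift the contour to $\Re(s) = 1-\sigma$. Since $\varphi$ is cuspidal, $L_{d/c}(s,\varphi)$ is entire, and the shift incurs no residual contribution. To handle the new line, I would expand $e(dn/c)$ as a linear combination of Dirichlet characters modulo $c$ via Gauss sums, apply the functional equation for each $L(s,\varphi\otimes\chi)$ from Section~\ref{GL(2)-l-functions}, and reassemble the characters via the companion additive expansion. This produces the reflected identity
$$L_{d/c}(s,\varphi) = c^{1-2s}\,\pi^{1-2s}\,\sum_{\pm} \mathcal{G}_{\varphi}^{\pm}(s)\,L_{\mp\bar{d}/c}(1-s,\varphi),$$
where $\mathcal{G}_{\varphi}^{\pm}(s)$ are explicit ratios of Gamma factors $\Gamma(\tfrac{1-s+\kappa\pm it_{\varphi}}{2})/\Gamma(\tfrac{s+\kappa\pm it_{\varphi}}{2})$; the two signs originate from the even and odd parts of the decomposition and encode the two parity sectors.

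Substituting this into the integral, making the change of variables $s \mapsto 1-s$, and unfolding the dual Dirichlet series yields
$$\frac{N}{c}\sum_{\pm}\sum_{n=1}^{\infty}\lambda_{\varphi}(n)\,e\!\left(\mp\frac{\bar{d}n}{c}\right) \cdot \frac{1}{2\pi i}\int_{(\sigma')} \tilde{V}(1-s)\,\mathcal{G}_{\varphi}^{\pm}(1-s)\left(\frac{nN}{c^{2}}\right)^{-s} ds.$$
The remaining task is to recognise the inner Mellin--Barnes integral as $\mathring{V}_{\varphi}^{\pm}(nN/c^{2})$. This follows from the classical Mellin pairs for the $Y$- and $K$-Bessel functions: the kernels $J_{\varphi}^{\pm}$ defined in~\eqref{eq:J-phi} have Mellin transforms proportional to the Gamma ratios $\mathcal{G}_{\varphi}^{\pm}$, with the constants $-\pi/\cosh(\pi t_{\varphi})$ and $4\epsilon(\varphi)\cosh(\pi t_{\varphi})$ precisely absorbing the sign, cosh, and $\epsilon(\varphi)$ factors arising from the functional equation.

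The main obstacle is the gamma-factor bookkeeping: tracking how the product $\Gamma(\tfrac{s+\kappa+it_{\varphi}}{2})\Gamma(\tfrac{s+\kappa-it_{\varphi}}{2})$ splits—after reflection and application of the complement and duplication formulae—into the two summands whose Mellin inversions give $J_{\varphi}^{+}$ and $J_{\varphi}^{-}$ with exactly the normalising constants in~\eqref{eq:J-phi}. All analytic steps (convergence on vertical lines, legality of the contour shift, Fubini) are routine once Stirling's bound from~\eqref{eq:Stirling} is invoked; the real content is this algebraic identification. Alternatively, one could bypass the computation by reproducing the spectral-theoretic derivation of Blomer--Harcos~\cite{BlomerHarcos2012}, testing the identity against an appropriate family of test functions, but the Mellin route outlined above makes the role of the Hankel transform $\mathring{V}_{\varphi}^{\pm}$ transparent.
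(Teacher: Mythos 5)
The paper does not prove Lemma~\ref{lem:Voronoi} at all: it is stated as a quoted result, with a pointer to Blomer--Harcos~\cite[Proposition~2]{BlomerHarcos2012}. So there is no ``paper proof'' to compare against; your task was really to supply one, and what you have written should be judged on its own terms.

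The overall Mellin framework is the right one, and your reading of the Hankel kernels $J_{\varphi}^{\pm}$ as inverse Mellin transforms of the two Gamma-ratio blocks is exactly what makes the identity work. Your honest flagging of the Gamma-factor bookkeeping as the real content is also fair. However, the route you propose for the functional equation of the additive twist has a genuine weak spot. You want to expand $e(dn/c)$ over Dirichlet characters mod $c$ via Gauss sums and then invoke ``the functional equation for each $L(s,\varphi\otimes\chi)$.'' Two problems. First, the paper only records the functional equation for $L(s,\varphi)$ (Section~\ref{GL(2)-l-functions}) and for real quadratic twists; the functional equation for $L(s,\varphi\otimes\chi)$ with $\chi$ an arbitrary Dirichlet character is not stated anywhere, so you would need to import and verify it. Second, and more seriously, for composite $c$ the expansion $e(dn/c)=\frac{1}{\varphi(c)}\sum_{\chi}\bar\chi(d)\tau(\chi,n)$ involves \emph{imprimitive} $\chi$, for which $\tau(\chi,n)$ does \emph{not} collapse to $\bar\chi(n)\tau(\chi)$, and for which $L(s,\varphi\otimes\chi)$ is an imprimitive $L$-function differing from the primitive one by local factors. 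Reassembling these into a clean statement of the form $L_{d/c}(s,\varphi)=c^{1-2s}\pi^{1-2s}\sum_{\pm}\mathcal{G}_{\varphi}^{\pm}(s)\,L_{\mp\bar d/c}(1-s,\varphi)$, with no extraneous Euler factors at primes dividing $c$, is exactly the nontrivial content and is not something you can wave through. For prime $c$ the sketch goes through; for general $c$ it does not as written.

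The cleaner and standard route is to bypass characters entirely and derive the functional equation of $L_{d/c}(s,\varphi)$ directly from automorphy. Take the (twisted) period integral $\int_{0}^{\infty}\varphi\!\left(\frac{d}{c}+iy\right)y^{s}\frac{dy}{y}$ (splitting into the $\cosh$/$\sinh$ parts of the Fourier expansion to get the $\pm$ sectors), and apply the transformation $y\mapsto 1/(c^{2}y)$ together with $\varphi(\gamma z)=\varphi(z)$ for a matrix $\gamma=\left(\begin{smallmatrix}a & b\\ c & -d\end{smallmatrix}\right)\in\SL_{2}(\Z)$, which swaps the cusp $d/c$ with $-\bar d/c$. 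This produces precisely the relation you want, with the $c^{1-2s}$ factor and the correct swap $d\mapsto -\bar d$, and no primitivity issues. From there your contour-shift and Mellin--Barnes identification of $\mathring V_{\varphi}^{\pm}$ are fine (the shift is justified by Phragm\'en--Lindel\"of once the functional equation is in hand; $L_{d/c}(s,\varphi)$ is entire since $\varphi$ is cuspidal). If you wish to keep the character route, you must at minimum restrict to $c$ squarefree with all $\chi$ primitive or work out the imprimitive contributions explicitly; as stated the argument has a gap at that step.
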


\begin{corollary}[Vorono\u{\i} summation for twists]\label{lem:Voronoi-twists}
Let $c \in \N$ be an odd squarefree integer and $d \in \Z$ with $(c, d) = 1$. Let $V: (0, \infty) \to \C$ be a smooth function with compact support. Then we have for $N > 0$ that
\begin{equation}\label{eq:Voronoi-twists}
\sum_{n} \lambda_{\varphi}(n) \left(\frac{c}{n} \right) e \left(\frac{dn}{c} \right) V \left(\frac{n}{N} \right)
 = \frac{N}{c} \sum_{n} \sum_{\pm} \lambda_{\varphi}(n) \left(\frac{c}{n} \right) e \left(\mp \frac{\overline{d} n}{c} \right) \mathring{V}_{\varphi}^{\pm} \left(\frac{n}{c^{2}/N} \right).
\end{equation}
\end{corollary}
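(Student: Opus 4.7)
The plan is to open the character $\chi_c$ via its Gauss sum expansion, apply Lemma~\ref{lem:Voronoi} at each additive frequency, and reassemble the resulting sum into a dual Gauss sum. Since $\chi_c(n)$ vanishes unless $(n,c)=1$, and $\chi_c$ is a primitive Dirichlet character of conductor $c^\ast$ equal to $c$ or $4c$ according as $c\equiv 1$ or $3\pmod 4$, we may write
\begin{equation*}
\chi_c(n)=\frac{1}{\tau(\chi_c)}\sum_{b\tpmod{c^\ast}}\chi_c(b)\,e\!\left(\frac{bn}{c^\ast}\right).
\end{equation*}
Substituting this into the left-hand side of~\eqref{eq:Voronoi-twists} and combining the two additive characters over their common denominator produces, for each $b$, an inner sum $\sum_n\lambda_\varphi(n)\,e(\alpha n)V(n/N)$ with $\alpha$ a rational number of denominator dividing $c^\ast$.

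For each such $\alpha$, reducing to lowest terms and invoking Lemma~\ref{lem:Voronoi} yields a dual sum in $\lambda_\varphi(m)$ with a new additive phase. Because $c$ is odd and squarefree, the Jacobi--Kronecker symbol $\chi_c$ splits multiplicatively over the prime divisors of $c$, so the outer $b$-sum can be analysed via the Chinese remainder theorem. Degenerate terms, in which the combined phase is not in lowest terms modulo $c$, arise only from a controlled set of residues $b$, and squarefreeness of $c$ ensures these contributions can be separated and reassembled cleanly.

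The reorganised $b$-sum on the dual side is itself a Gauss sum, which collapses via $\tau(\chi_c)\overline{\tau(\chi_c)}=c^\ast$ together with the evenness of $\chi_c$ recorded above. This recovers the character $\chi_c(m)$ on the dual side together with the phase $e(\mp\overline{d}m/c)$, and combines the $1/\sqrt{c^\ast}$ factors arising from the pair of Gauss sums with the width factor from Lemma~\ref{lem:Voronoi} to produce the asserted prefactor $N/c$.

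The main technical obstacle is the bookkeeping in the case $c\equiv 3\pmod 4$, where the true modulus of $\chi_c$ is $4c$ rather than $c$: the combined phase $bn/(4c)+dn/c$ must first be placed over the common denominator $4c$ before Lemma~\ref{lem:Voronoi} can be applied, and the resulting dual phase must be shown to collapse back to $e(\mp\overline d\,m/c)$ modulo $c$ by using $(4,c)=1$. Handling this together with the reduction of non-lowest-terms fractions $\alpha$ is where essentially all of the work lies.
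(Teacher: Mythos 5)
Your approach is genuinely different from the paper's, which is a one-liner: it cites Jacquet--Langlands or Atkin--Li to say that $\varphi \otimes (\tfrac{c}{\cdot})$ is itself a Hecke--Maa{\ss} newform of higher level with trivial nebentypus, and then invokes a level-aware version of Lemma~\ref{lem:Voronoi} as a black box. You instead try to open $\chi_c = (\tfrac{c}{\cdot})$ with a Gauss-sum expansion, apply level-one Vorono\u{\i} at each additive frequency, and close the outer sum as a dual Gauss sum.

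The final reassembly step, as you describe it, does not work, and that is exactly where the content of the statement lies. Take $c \equiv 1 \tpmod 4$ so $c^\ast = c$, insert the expansion $\chi_c(n) = \tau(\chi_c)^{-1}\sum_{b\tpmod{c}}\chi_c(b)e(bn/c)$, combine the phases and substitute $k = b + d$. After applying Lemma~\ref{lem:Voronoi} with modulus $c$ in the generic case $(k, c) = 1$, the outer sum you must evaluate is
\begin{equation*}
\sum_{\substack{k \tpmod{c} \\ (k, c) = 1}} \chi_c(k - d)\, e\!\left(\mp\frac{\overline{k}\,m}{c}\right).
\end{equation*}
Replacing $k \mapsto \overline{k}$ and using that $\chi_c$ is quadratic turns this into $\sum_{k} \chi_c(k)\chi_c(1 - dk)\, e(\mp km/c)$; the extra factor $\chi_c(1 - dk)$ makes this a Sali\'e-type sum with a quadratic argument, not a Gauss sum, and it does \emph{not} collapse to $\tau(\chi_c)\chi_c(m)\,e(\mp\overline{d}m/c)$. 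A direct check with $c = 5$, $d = m = 1$, and the $+$ sign gives $e(-3/5) - e(-2/5) - e(-4/5) \approx -0.31 + 0.23i$, far from $\sqrt{5}\,e(-1/5)$. In addition, the degenerate residues (those $b$ with $(b+d, c) > 1$, e.g.\ $b \equiv -d \tpmod p$ for $p \mid c$) are not negligible: for $c$ prime they produce the untwisted sum $\sum_n \lambda_\varphi(n)V(n/N)$, which carries no character at all and cannot be absorbed into the asserted right-hand side without further argument. So the obstruction is not, as you say, ``essentially all bookkeeping'' in the $c \equiv 3 \tpmod 4$ case; the closing-Gauss-sum mechanism fails already for $c$ an odd prime $\equiv 1 \tpmod 4$, and the argument as proposed does not establish \eqref{eq:Voronoi-twists}.
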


\begin{proof}
We use~\cite[Proposition 3.8 (iii)]{JacquetLanglands1970} or~\cite[Theorem 3.1 (ii)]{AtkinLi1978} to see that there exists a Hecke--Maa{\ss} newform $\varphi \otimes (\frac{c}{\cdot})$ of level $c^{2}$ and trivial central character such that~$\lambda_{\varphi \otimes (\frac{c}{\cdot})}(n) = \lambda_{\varphi}(n) (\frac{c}{n})$. Corollary~\ref{lem:Voronoi-twists} now follows from (a level-included version of) Lemma~\ref{lem:Voronoi}.
\end{proof}

\section{Proof of Theorem~\ref{main}}\label{proof}
In this section, we embark on the proof of Theorem~\ref{main}. Recall that our goal is to estimate
\begin{equation*}
\mathcal{M}_{\varphi}(T, H) \coloneqq \sideset{}{^{\ast}} \sum_{H \leq h \leq 2H} \ \sideset{}{^{\ast}} \sum_{T \leq n \leq 2T} 
L \left(\frac{1}{2}, \varphi \otimes \chi_{8n} \right) L \left(\frac{1}{2}, \varphi \otimes \chi_{8(n+h)} \right).
\end{equation*}
We regard $\varphi$ as fixed, and use the convention that $\epsilon$ is an arbitrarily small positive quantity, not necessarily the same in each instance. Each inequality in what follows is allowed to have an implicit constant dependent at most on $\varphi$ and $\epsilon$, unless otherwise specified.

\subsection{Trivial Bound}
Applying the Cauchy--Schwarz inequality and trivially estimating the second moment of quadratic twists imply
\begin{equation*}
\mathcal{M}_{\varphi}(T, H) \ll \sideset{}{^{\ast}} \sum_{h \ll H} \left(\sideset{}{^{\ast}} \sum_{n \ll T} \left|L \left(\frac{1}{2}, \varphi \otimes \chi_{8n} \right) \right|^{2} \right)^{\frac{1}{2}} \left(\sideset{}{^{\ast}} \sum_{n \ll T} \left|L \left(\frac{1}{2}, \varphi \otimes \chi_{8(n+h)} \right) \right|^{2} \right)^{\frac{1}{2}} \ll HT^{1+\epsilon}
\end{equation*}
for any $1 \leq H \leq T$. To establish Theorem~\ref{main}, we thus need to save roughly $HT^{-\frac{1}{4}} \geq 1$.

\subsection{Smoothing}
Upon approximating the indicator function $\mathbf{1}_{(H, 2H] \times (N, 2N]}$ by a compactly supported smooth function $W \in C_{c}^{\infty}([1, 2] \times [1, 2])$, it suffices to handle the smoothed version
\begin{equation*}
\sideset{}{^{\ast}} \sum_{h} \sideset{}{^{\ast}} \sum_{n} L \left(\frac{1}{2}, \varphi \otimes \chi_{8n} \right) L \left(\frac{1}{2}, \varphi \otimes \chi_{8(n+h)} \right) W \left(\frac{n}{T}, \frac{h}{H} \right).
\end{equation*}

\subsection{Applying the $\delta$-Symbol}
We now use the circle method (Lemma~\ref{DeltaCor}) to separate~the oscillations. Let $1 \leq C \leq \sqrt{T}$ be a parameter that we shall determine later, and fix a smooth function $U$ that takes $1$ on $[1, 2]$ and $0$ outside $[\frac{1}{2}, \frac{5}{2}]$. Then we need to analyse the expression
\begin{multline*}
\sideset{}{^{\ast}} \sum_{h} \sideset{}{^{\ast}} \sum_{n} L \left(\frac{1}{2}, \varphi \otimes \chi_{8n} \right) W \left(\frac{n}{T}, \frac{h}{H} \right) \sideset{}{^{\ast}} \sum_{m} L \left(\frac{1}{2}, \varphi \otimes \chi_{8m} \right) U \left(\frac{m}{T} \right) \delta(m = n+h)\\
 = \frac{1}{\mathcal{C}} \sum_{c} \frac{1}{c} \sideset{}{^{\ast}} \sum_{h} \sideset{}{^{\ast}} \sum_{n} L \left(\frac{1}{2}, \varphi \otimes \chi_{8n} \right) W \left(\frac{n}{T}, \frac{h}{H} \right) \sideset{}{^{\ast}} \sum_{m} L \left(\frac{1}{2}, \varphi \otimes \chi_{8m} \right) U \left(\frac{m}{T} \right)\\ 
\times \sum_{a \tpmod{c}} e \left(\frac{a(n+h-m)}{c} \right) V_{0} \left(\frac{c}{C}, \frac{n+h-m}{cC} \right)
\end{multline*}
for some $\mathcal{C} \sim C$ and a fixed smooth function $V_{0}$ satisfying $V_{0}(x, y) \ll \delta(|x|, |y| \ll 1)$. Pulling out the divisor $b = (a, c)$ in tandem with the replacement $a \mapsto -a$ yields
\begin{multline*}
\mathcal{M}_{\varphi}(T, H) \ll \frac{1}{C} \sum_{b, c} \frac{1}{bc} \ \sideset{}{^{\star}} \sum_{a \tpmod{c}} \sideset{}{^{\ast}} \sum_{h} e \left(-\frac{ah}{c} \right) \sideset{}{^{\ast}} \sum_{m} L \left(\frac{1}{2}, \varphi \otimes \chi_{8m} \right) e \left(\frac{am}{c} \right) U \left(\frac{m}{T} \right)\\
\times \sideset{}{^{\ast}} \sum_{n} L \left(\frac{1}{2}, \varphi \otimes \chi_{8n} \right) e \left(-\frac{an}{c} \right) V_{0} \left(\frac{bc}{C}, \frac{n+h-m}{bcC} \right) W \left(\frac{n}{T}, \frac{h}{H} \right)+T^{-2023}.
\end{multline*}

\subsection{Poisson Summation in $h$}
We first remove the asterisk (the squarefree condition)~on the $h$-sum via M\"{o}bius inversion, writing
\begin{multline*}
\sideset{}{^{\ast}} \sum_{h} e \left(-\frac{ah}{c} \right) W \left(\frac{n}{T}, \frac{h}{H} \right) V_{0} \left(\frac{bc}{C}, \frac{n+h-m}{bcC} \right)\\
 = \sum_{d} \mu(d) \sum_{h} e \left(-\frac{ad^{2} h}{c} \right) V_{0} \left(\frac{bc}{C}, \frac{n+d^{2} h-m}{bcC} \right) W \left(\frac{n}{T}, \frac{d^{2} h}{H} \right).
\end{multline*}
Applying Poisson summation (Lemma \ref{Fouvry-Kowalski-Michel}) to the $h$-sum shows that the right-hand side is
\begin{equation*}
H \sum_{d} \frac{\mu(d)}{d^{2}} \sum_{h} \delta(h \equiv ad^{2} \tpmod{c}) \mathcal{J}_{1}(h, m, n, c),
\end{equation*}
where\footnote{We suppress less important variables from the notation, which applies to the ensuing integral transforms.}
\begin{equation*}
\mathcal{J}_{0}(h, m, n, c) \coloneqq \int_{\R} V_{0} \left(\frac{bc}{C}, \frac{n+Hy-m}{bcC} \right) W \left(\frac{n}{T}, y \right) e \left(-\frac{hHy}{cd^{2}} \right) dy.
\end{equation*}
Repeated integration by parts ensures an arbitrary saving unless\footnote{We assume without loss of generality that $h > 0$, since the argument would be quite similar in the other case. This kind of restriction applies to the subsequent analysis.}
\begin{equation*}
h \ll \frac{cd^{2}}{H}.
\end{equation*}
The above congruence condition is solvable with $(a, c) = 1$ if and only if $(c, d^{2}) = (c, h)$. We thus factorise $c$ in terms of the Chinese Remainder Theorem (cf.~\cite[Section 6.3]{KiralYoung2021}). Write
\begin{equation}\label{eq:condition-1}
c = c_{0} c_{1}, \qquad h = h_{0} h_{1},
\end{equation}
where the factorisations may be written locally as
\begin{align*}
c_{0} &= \prod_{\nu_{p}(c) > \nu_{p}(h)} p^{\nu_{p}(c)}, \qquad c_{1} = \prod_{1 \leq \nu_{p}(c) \leq \nu_{p}(h)} p^{\nu_{p}(c)},\\
h_{0} &= \prod_{\nu_{p}(h) \geq \nu_{p}(c)} p^{\nu_{p}(h)}, \qquad h_{1} = \prod_{1 \leq \nu_{p}(h) < \nu_{p}(c)} p^{\nu_{p}(h)},
\end{align*}
with the $p$-adic valuation defined by $\nu_{p}(n) = d$ for $p^{d} \parallel n$. Alternatively, if $n^{\ast} = \prod_{p \mid n} p$, then
\begin{equation}\label{eq:condition-2}
(c_{0}, h_{0}) = 1, \qquad c_{1} \mid h_{0}, \qquad h_{1} h_{1}^{\ast} \mid c_{0}.
\end{equation}
These conditions characterise the variables $c_{0}$, $c_{1}$, $h_{0}$, $h_{1}$. Note that
\begin{equation}\label{eq:condition-3}
(c_{0}, c_{1}) = (c_{1}, h_{1}) = (h_{0}, h_{1}) = 1
\end{equation}
automatically from the other conditions. It transpires from~\eqref{eq:condition-1} and~\eqref{eq:condition-2} that $(c, h) = c_{1} h_{1}$ so that we impose the condition $c_{1} h_{1} = (c_{0} c_{1}, d^{2}) = (\frac{c_{0}}{h_{1}} c_{1} h_{1}, d^{2})$. Hence, one may decompose
\begin{equation*}
d^{2} = c_{1} d^{\prime} h_{1},
\end{equation*}
where the new variable $d^{\prime}$ is only subject to the restriction $(\frac{c_{0}}{h_{1}}, d^{\prime}) = 1$, namely $(c_{0}, d^{\prime}) = 1$, since $\frac{c_{0}}{h_{1}}$ shares the same prime factors as $c_{0}$. It is now possible to recast the initial congruence condition $h \equiv ad^{2} \tpmod{c}$ as
\begin{equation*}
h_{0} h_{1} \equiv ac_{1} d^{\prime} h_{1} \tpmod{c_{0} c_{1}} \qquad \Longleftrightarrow \qquad a \equiv \frac{h_{0} \overline{d^{\prime}}}{c_{1}} \tpmod{\frac{c_{0}}{h_{1}}},
\end{equation*}
where $\overline{d^{\prime}}$ is taken to be the multiplicative inverse modulo $c_{0}$ thanks to~\eqref{eq:condition-2}. This condition can further be rewritten as
\begin{equation*}
a \equiv \frac{h_{0} \overline{d^{\prime}}}{c_{1}}+\frac{c_{0} u}{h_{1}} \tpmod{c_{0}}, \qquad u \tpmod{h_{1}},
\end{equation*}
where $u$ runs through all residue classes modulo $h_{1}$, since as soon as $a$ is coprime to $\frac{c_{0}}{h_{1}}$, it is also coprime to $c_{0}$. The Chinese Remainder Theorem implies that the sum over $a$ equals
\begin{equation*}
h_{1} e \left(\frac{\overline{c_{1} d^{\prime}} h_{0}(m-n)}{c_{0} c_{1}} \right) S(m-n; 0; c_{1}) \delta(m \equiv n \tpmod{h_{1}}).
\end{equation*}
It is advantageous to open the Kloosterman sum (or the Ramanujan sum) in the form
\begin{equation*}
\sum_{c_{2} \mid (c_{1}, m-n)} \mu \left(\frac{c_{1}}{c_{2}} \right) c_{2},
\end{equation*}
and replace $c_{1} \mapsto c_{1} c_{2}$.

As a result, we are led to the expression
\begin{multline*}
\mathcal{M}_{\varphi}(T, H) \ll \frac{H}{C} \sum_{\substack{b, c_{0}, c_{1}, c_{2}, d \\ d^{2} = c_{1} c_{2} d^{\prime} h_{1} \\ (c_{0}, d^{\prime}) = 1}} \frac{\mu(c_{1}) \mu(d)}{bc_{0} c_{1}^{2} c_{2} d^{\prime}} \sum_{h_{0} \ll \frac{c_{0} c_{1}^{2} c_{2}^{2} d^{\prime}}{H}} \ \underset{m \equiv n \tpmod{c_{2} h_{1}}}{\sideset{}{^{\ast}} \sum \sideset{}{^{\ast}} \sum} L \left(\frac{1}{2}, \varphi \otimes \chi_{8m} \right) L \left(\frac{1}{2}, \varphi \otimes \chi_{8n} \right)\\
\times e \left(\frac{\overline{c_{1} c_{2} d^{\prime}} h_{0}(m-n)}{c_{0} c_{1}} \right) U \left(\frac{m}{T} \right) \mathcal{J}_{0}(h_{0} h_{1}, m, n, c_{0} c_{1} c_{2})+T^{-2023},
\end{multline*}
where we drop the conditions~\eqref{eq:condition-1} and~\eqref{eq:condition-2} from summations for simplicity.

\subsection{Poisson Summation in $m$}
It now follows from the approximate functional equation (Lemma~\ref{approximate-functional-equations}) that there exists a smooth function $W_{1} \in C_{c}^{\infty}(\R)$ supported on $[\frac{1}{2}, \frac{5}{2}]$ such that
\begin{multline*}
\sideset{}{^{\ast}} \sum_{m \equiv n \tpmod{c_{2} h_{1}}} L \left(\frac{1}{2}, \varphi \otimes \chi_{8m} \right) e \left(\frac{\overline{c_{1} c_{2} d^{\prime}} h_{0} m}{c_{0} c_{1}} \right) U \left(\frac{m}{T} \right) \mathcal{J}_{0}(h_{0} h_{1}, m, n, c_{0} c_{1} c_{2})\\
 = 2 \sum_{\ell_{1} = 1}^{\infty} \frac{\lambda_{\varphi}(\ell_{1})}{\sqrt{\ell_{1}}} \ \sideset{}{^{\ast}} \sum_{m \equiv n \tpmod{c_{2} h_{1}}} \left(\frac{8m}{\ell_{1}} \right) e \left(\frac{\overline{c_{1} c_{2} d^{\prime}} h_{0} m}{c_{0} c_{1}} \right) U \left(\frac{m}{T} \right) W_{1} \left(\frac{\ell_{1}}{8m} \right)\\
\times \mathcal{J}_{0}(h_{0} h_{1}, m, n, c_{0} c_{1} c_{2})+T^{-2023}.
\end{multline*}
We remove the asterisk and execute Poisson summation (Lemma~\ref{Fouvry-Kowalski-Michel}) in the $m$-sum, deducing
\begin{multline*}
\frac{2T}{c_{0} c_{1} c_{2} h_{1}} \sum_{\ell_{1} = 1}^{\infty} \frac{\lambda_{\varphi}(\ell_{1})}{\ell_{1}^{\frac{3}{2}}} \sum_{(e, \ell_{1}) = 1} \frac{\mu(e)}{e^{2}} \sum_{m} \sum_{\alpha \tpmod{c_{0} c_{1} c_{2} h_{1} \ell_{1}}} \left(\frac{8\alpha}{\ell_{1}} \right)\\
\times e \left(\frac{\alpha \overline{c_{1} c_{2} d^{\prime}} e^{2} h_{0}}{c_{0} c_{1}}+\frac{\alpha m}{c_{0} c_{1} c_{2} h_{1} \ell_{1}} \right) \delta(\alpha e^{2} \equiv n \tpmod{c_{2} h_{1}}) \mathcal{J}_{1}(h_{0} h_{1}, \ell_{1}, m, n, c_{0} c_{1} c_{2}),
\end{multline*}
where
\begin{equation*}
\mathcal{J}_{1}(h_{0} h_{1}, \ell_{1}, m, n, c_{0} c_{1} c_{2}) \coloneqq \int_{\R} U(y) W_{1} \left(\frac{\ell_{1}}{8Ty} \right) \mathcal{J}_{0}(h_{0} h_{1}, Ty, n, c_{0} c_{1} c_{2}) e \left(-\frac{mTy}{c_{0} c_{1} c_{2} e^{2} h_{1} \ell_{1}} \right) dy.
\end{equation*}
Repeated integration by parts ensures an arbitrary saving unless
\begin{equation*}
m \ll \frac{c_{0} c_{1} c_{2} e^{2} h_{1} \ell_{1}}{T}.
\end{equation*}

\subsection{Poisson Summation in $n$}
We execute Poisson summation (Lemma~\ref{Fouvry-Kowalski-Michel}) in the $n$-sum in the same manner as above, deducing
\begin{align*}
\mathcal{M}_{\varphi}(T, H) &\ll \frac{HT^{2}}{C} \sum_{\substack{b, c_{0}, c_{1}, c_{2}, d, e, f \\ d^{2} = c_{1} c_{2} d^{\prime} h_{1} \\ (c_{0}, d^{\prime}) = 1}} \frac{\mu(c_{1}) \mu(d) \mu(e) \mu(f)}{bc_{0}^{3} c_{1}^{4} c_{2}^{3} d^{\prime} e^{2} f^{2} h_{1}^{2}} \sum_{h_{0} \ll \frac{c_{0} c_{1}^{2} c_{2}^{2} d^{\prime}}{H}} \sum_{\substack{T^{1-\epsilon} \ll \ell_{1}, \ell_{2} \ll T^{1+\epsilon} \\ (\ell_{1}, e) = (\ell_{2}, f) = 1}} \frac{\lambda_{\varphi}(\ell_{1}) \lambda_{\varphi}(\ell_{2})}{(\ell_{1} \ell_{2})^{\frac{3}{2}}}\\
& \quad \times \sum_{m \ll \frac{c_{0} c_{1} c_{2} e^{2} h_{1} \ell_{1}}{T}} \sum_{n \ll \frac{c_{0} c_{1} c_{2} f^{2} h_{1} \ell_{2}}{T}} \sum_{\substack{\alpha \tpmod{c_{0} c_{1} c_{2} h_{1} \ell_{1}} \\ \beta \tpmod{c_{0} c_{1} c_{2} h_{1} \ell_{2}}}} \left(\frac{8\alpha}{\ell_{1}} \right) \left(\frac{8\beta}{\ell_{2}} \right)\\
& \quad \times e \left(\frac{\alpha \overline{c_{1} c_{2} d^{\prime}} e^{2} h_{0}}{c_{0} c_{1}}-\frac{\beta \overline{c_{1} c_{2} d^{\prime}} f^{2} h_{0}}{c_{0} c_{1}}+\frac{\alpha m}{c_{0} c_{1} c_{2} h_{1} \ell_{1}}+\frac{\beta n}{c_{0} c_{1} c_{2} h_{1} \ell_{2}} \right)\\
&\qquad \times \delta(\alpha e^{2} \equiv \beta f^{2} \tpmod{c_{2} h_{1}}) \mathcal{J}_{2}(h_{0} h_{1}, \ell_{1}, \ell_{2}, m, n, c_{0} c_{1} c_{2})+T^{-2023},
\end{align*}
where
\begin{equation*}
\mathcal{J}_{2}(h_{0} h_{1}, \ell_{1}, \ell_{2}, m, n, c_{0} c_{1} c_{2}) \coloneqq \int_{\R} W_{2} \left(\frac{\ell_{2}}{8Ty} \right) \mathcal{J}_{1}(h_{0} h_{1}, \ell_{1}, m, Ty, c_{0} c_{1} c_{2}) e \left(-\frac{nTy}{c_{0} c_{1} c_{2} f^{2} h_{1} \ell_{2}} \right) dy
\end{equation*}
for a smooth function $W_{2} \in C_{c}^{\infty}(\R)$ supported on $[\frac{1}{2}, \frac{5}{2}]$.

\subsection{A Simplification of Character Sums}
In anticipation of the forthcoming analysis, it is convenient to simplify the character sums appearing in the above section. Detecting the restriction $\alpha e^{2} \equiv \beta f^{2} \tpmod{c_{2} h_{1}}$ via additive characters modulo $c_{2} h_{1}$, we derive
\begin{multline*}
\frac{1}{c_{2} h_{1}} \sum_{\substack{\alpha \tpmod{c_{0} c_{1} c_{2} h_{1} \ell_{1}} \\ \beta \tpmod{c_{0} c_{1} c_{2} h_{1} \ell_{2}} \\ \gamma \tpmod{c_{2} h_{1}}}} \left(\frac{8\alpha}{\ell_{1}} \right) \left(\frac{8\beta}{\ell_{2}} \right) e \left(\frac{\alpha \overline{c_{1} c_{2} d^{\prime}} e^{2} h_{0}}{c_{0} c_{1}}-\frac{\beta \overline{c_{1} c_{2} d^{\prime}} f^{2} h_{0}}{c_{0} c_{1}} \right)\\
\times e \left(\frac{\alpha m}{c_{0} c_{1} c_{2} h_{1} \ell_{1}}+\frac{\beta n}{c_{0} c_{1} c_{2} h_{1} \ell_{2}}+\frac{\gamma(\alpha e^{2}-\beta f^{2})}{c_{2} h_{1}} \right).
\end{multline*}
The sum over $\alpha$ vanishes unless
\begin{equation}\label{eq:alpha}
\overline{c_{1}} c_{2} \overline{c_{2} d^{\prime}} e^{2} h_{0} h_{1} \ell_{1}+m+\gamma c_{0} c_{1} e^{2} \ell_{1} \equiv 0 \tpmod{c_{0} c_{1} c_{2} h_{1}},
\end{equation}
in which case it is
\begin{equation}\label{eq:alpha-evaluation}
c_{0} c_{1} c_{2} h_{1} \left(\frac{8c_{0} c_{1} c_{2} h_{1} m}{\ell_{1}} \right) \tau \left(\left(\frac{\cdot}{\ell_{1}} \right) \right).
\end{equation}
Similarly, the sum over $\beta$ vanishes unless
\begin{equation}\label{eq:beta}
-\overline{c_{1}} c_{2} \overline{c_{2} d^{\prime}} f^{2} h_{0} h_{1} \ell_{2}+n-\gamma c_{0} c_{1} f^{2} \ell_{2} \equiv 0 \tpmod{c_{0} c_{1} c_{2} h_{1}},
\end{equation}
in which case it is
\begin{equation}\label{eq:beta-evaluation}
c_{0} c_{1} c_{2} h_{1} \left(\frac{8c_{0} c_{1} c_{2} h_{1} n}{\ell_{2}} \right) \tau \left(\left(\frac{\cdot}{\ell_{2}} \right) \right).
\end{equation}
Furthermore, we factorise the sum over $\gamma$ into sums over $\gamma_{1} \tpmod{c_{2}}$ and $\gamma_{2} \tpmod{h_{1}}$.~The combination of~\eqref{eq:alpha} and~\eqref{eq:beta} shows
\begin{gather*}
\overline{c_{1} d^{\prime}} e^{2} h_{0} h_{1} \ell_{1}+m \equiv -\overline{c_{1} d^{\prime}} f^{2} h_{0} h_{1} \ell_{2}+n \equiv 0 \tpmod{c_{0}}, \qquad m \equiv n \equiv 0 \tpmod{c_{1}},\\
m+\gamma_{1} c_{0} c_{1} e^{2} \ell_{1} \equiv n-\gamma_{1} c_{0} c_{1} f^{2} \ell_{2} \equiv 0 \tpmod{c_{2}}, \qquad m \equiv n \equiv 0 \tpmod{h_{1}}.
\end{gather*}
Note that~\eqref{eq:condition-2} and~\eqref{eq:condition-3} imply in particular that $(c_{0} c_{1}, c_{2} h_{1}) = (c_{1}, c_{2})h_{1}$ and $(c_{2}, h_{1}) = 1$. By an elementary consideration, the congruence condition modulo $c_{0}$ boils down to
\begin{equation*}
f^{2} \overline{\ell_{1}} m \equiv -e^{2} \overline{\ell_{2}} n \tpmod{c_{0}},
\end{equation*}
where we assume $(c_{0}, \ell_{1} \ell_{2}) = 1$ due to the presence of quadratic characters in~\eqref{eq:alpha-evaluation} and~\eqref{eq:beta-evaluation}. Similarly, the congruence condition modulo $c_{2}$ boils down to
\begin{equation*}
f^{2} \overline{\ell_{1}} m \equiv -e^{2} \overline{\ell_{2}} n \tpmod{c_{2}},
\end{equation*}
which altogether does not depend on $\gamma_{1} \tpmod{c_{2}}$.

Gathering the above computations together leads to
\begin{align*}
\mathcal{M}_{\varphi}(T, H) &\ll T^{2} \sum_{\substack{b, c_{1}, c_{2}, d, e, f \\ d^{2} = c_{1} c_{2} d^{\prime} h_{1}}} \frac{\mu(c_{1}) \mu(d) \mu(e) \mu(f)}{b^{2} c_{1} e^{2} f^{2}} \sup_{\substack{c_{0} \ll \frac{C}{bc_{1} c_{2}} \\ (c_{0}, d^{\prime}) = 1}} \sup_{h_{0} \ll \frac{c_{1} c_{2} d^{\prime} C}{bH}} \sum_{\substack{T^{1-\epsilon} \ll \ell_{1}, \ell_{2} \ll T^{1+\epsilon} \\ (\ell_{1}, c_{1} eh_{1}) = (\ell_{2}, c_{1} fh_{1}) = 1}} \frac{\lambda_{\varphi}(\ell_{1}) \lambda_{\varphi}(\ell_{2})}{(\ell_{1} \ell_{2})^{\frac{3}{2}}}\\
& \quad \times \sum_{m \ll \frac{e^{2} \ell_{1} C}{bc_{1} T}} \sum_{n \ll \frac{f^{2} \ell_{2} C}{bc_{1} T}} \left(\frac{8c_{0} c_{2} m}{\ell_{1}} \right) \left(\frac{8c_{0} c_{2} n}{\ell_{2}} \right) \tau \left(\left(\frac{\cdot}{\ell_{1}} \right) \right) \tau \left(\left(\frac{\cdot}{\ell_{2}} \right) \right)\\
& \quad \times \delta(e^{2} \ell_{1} n \equiv -f^{2} \ell_{2} m \tpmod{\frac{c_{0} c_{2}}{\delta}}) \mathcal{J}_{2}(h_{0} h_{1}, \ell_{1}, \ell_{2}, c_{1} h_{1} m, c_{1} h_{1} n, c_{0} c_{1} c_{2})+T^{-2023},
\end{align*}
where $\delta = (c_{1} h_{1}, c_{0} c_{2}) = (c_{1}, c_{2})h_{1}$. It is convenient to restrict our attention to odd squarefree integers $\ell_{1}$ and $\ell_{2}$ so that the Gau{\ss} sums simplify to
\begin{equation*}
\tau \left(\left(\frac{\cdot}{\ell_{1}} \right) \right) = \epsilon_{\ell_{1}} \sqrt{\ell_{1}}, \qquad 
\tau \left(\left(\frac{\cdot}{\ell_{2}} \right) \right) = \epsilon_{\ell_{2}} \sqrt{\ell_{2}}.
\end{equation*}
Without loss of generality, we shall focus on the case where $\epsilon_{\ell_{1}} = \epsilon_{\ell_{2}} = 1$.

\subsection{Amplification}
We introduce an amplification parameter $L \geq 1$ at our disposal. Then
\begin{align*}
\mathcal{M}_{\varphi}(T, H) &\ll T^{\epsilon} \sum_{\substack{b, c_{1}, c_{2}, d, e, f \\ d^{2} = c_{1} c_{2} d^{\prime} h_{1}}} \frac{\mu(c_{1}) \mu(d) \mu(e) \mu(f)}{b^{2} c_{1} e^{2} f^{2}} \sup_{\substack{c_{0} \ll \frac{CL}{bc_{1} c_{2}} \\ (c_{0}, d^{\prime}) = 1}} \sup_{h_{0} \ll \frac{c_{1} c_{2} d^{\prime} C}{bH}} \ \sideset{}{^{\ast}} \sum_{\substack{T^{1-\epsilon} \ll \ell_{1}, \ell_{2} \ll T^{1+\epsilon} \\ (\ell_{1}, c_{1} eh_{1}) = (\ell_{2}, c_{1} fh_{1}) = 1}}\\
& \quad \times \lambda_{\varphi}(\ell_{1}) \lambda_{\varphi}(\ell_{2}) \sum_{m \ll \frac{e^{2} \ell_{1} C}{bc_{1} T}} \sum_{n \ll \frac{f^{2} \ell_{2} C}{bc_{1} T}} \left(\frac{8c_{0} c_{2} m}{\ell_{1}} \right) \left(\frac{8c_{0} c_{2} n}{\ell_{2}} \right)\\
& \quad \times \delta(e^{2} \ell_{1} n \equiv -f^{2} \ell_{2} m \tpmod{\frac{c_{0} c_{2}}{\delta}}) \mathcal{J}_{2}(h_{0} h_{1}, \ell_{1}, \ell_{2}, c_{1} h_{1} m, c_{1} h_{1} n, c_{0} c_{1} c_{2})+T^{-2023},
\end{align*}
where we pull out the factor $(\ell_{1} \ell_{2})^{-1}$ by partial summation. The determination of $L$ dictates the quality of the final bound.

\subsection{Divisor Switching}
We now perform divisor switching and write
\begin{equation*}
f^{2} \ell_{2} m+e^{2} \ell_{1} n = \frac{c_{0} c_{2}}{\delta} \cdot q, \qquad q \ll \frac{\delta e^{2} f^{2} \ell_{1} \ell_{2}}{LT}.
\end{equation*}
It follows from quadratic reciprocity~\eqref{eq:quadratic-reciprocity} and the assumption $\epsilon_{\ell_{1}} = \epsilon_{\ell_{2}} = 1$ that
\begin{equation*}
\left(\frac{8c_{0} c_{2} m}{\ell_{1}} \right) \left(\frac{8c_{0} c_{2} n}{\ell_{2}} \right)
 = \left(\frac{8\delta q}{\ell_{1} \ell_{2}} \right) \delta((\ell_{1}, fm) = (\ell_{2}, en) = (\ell_{1}, \ell_{2}) = 1).
\end{equation*}
Therefore, we are led to the expression
\begin{align*}
\mathcal{M}_{\varphi}(T, H) &\ll T^{\epsilon} \sum_{\substack{b, c_{1}, c_{2}, d, e, f \\ d^{2} = c_{1} c_{2} d^{\prime} h_{1}}} \frac{\mu(c_{1}) \mu(d) \mu(e) \mu(f)}{b^{2} c_{1} e^{2} f^{2}} \sup_{h_{0} \ll \frac{c_{1} c_{2} d^{\prime} C}{bH}} \ \sideset{}{^{\ast}} \sum_{\substack{T^{1-\epsilon} \ll \ell_{1}, \ell_{2} \ll T^{1+\epsilon} \\ (\ell_{1} \ell_{2}, c_{1} efh_{1}) = (\ell_{1}, \ell_{2}) = 1}} \lambda_{\varphi}(\ell_{1}) \lambda_{\varphi}(\ell_{2})\\
& \quad \times \sum_{\substack{q \ll \frac{\delta e^{2} f^{2} \ell_{1} \ell_{2}}{LT} \\ (q, ef) = 1}} \sum_{\substack{m \ll \frac{e^{2} \ell_{1} C}{bc_{1} T} \\ (m, q) = 1}} \sum_{\substack{n \ll \frac{f^{2} \ell_{2} C}{bc_{1} T} \\ (n, q) = 1}} \left(\frac{8\delta q}{\ell_{1} \ell_{2}} \right) \delta(e^{2} \ell_{1} n \equiv -f^{2} \ell_{2} m \tpmod{q})\\
& \quad \times \mathcal{J}_{2}(h_{0} h_{1}, \ell_{1}, \ell_{2}, c_{1} h_{1} m, c_{1} h_{1} n, c_{0} c_{1} c_{2})+T^{-2023},
\end{align*}
where we attach the restriction $(q, ef) = 1$ for technical brevity.

\subsection{Poisson Summation in $m$}
Using Poisson summation (Lemma~\ref{Fouvry-Kowalski-Michel}) in the $m$-sum yields
\begin{multline*}
\sum_{\substack{m \ll \frac{e^{2} \ell_{1} C}{bc_{1} T} \\ (m, q) = 1}} \delta(e^{2} \ell_{1} n \equiv -f^{2} \ell_{2} m \tpmod{q}) \mathcal{J}_{2}(h_{0} h_{1}, \ell_{1}, \ell_{2}, c_{1} h_{1} m, c_{1} h_{1} n, c_{0} c_{1} c_{2})\\
 = \frac{e^{2} \ell_{1} C}{bc_{1} qT} \sum_{m} e \left(-\frac{e^{2} \overline{f}^{2} \ell_{1} \overline{\ell_{2}} mn}{q} \right) \mathcal{J}_{3}(h_{0}, h_{1}, \ell_{1}, \ell_{2}, m, n, c_{1}, c_{2}),
\end{multline*}
where
\begin{equation*}
\mathcal{J}_{3}(h_{0}, h_{1}, \ell_{1}, \ell_{2}, m, n, c_{1}, c_{2}) \coloneqq \int_{\R} \mathcal{J}_{2} \left(h_{0} h_{1}, \ell_{1}, \ell_{2}, \frac{e^{2} h_{1} \ell_{1} Cy}{bT}, c_{1} h_{1} n, c_{0} c_{1} c_{2} \right) e \left(-\frac{e^{2} \ell_{1} Cmy}{bc_{1} qT} \right) dy.
\end{equation*}
Repeated integration by parts ensures an arbitrary saving unless
\begin{equation*}
m \ll \frac{bc_{1} qT}{e^{2} \ell_{1} C}.
\end{equation*}

\subsection{Poisson Summation in $n$}
Using Poisson summation (Lemma~\ref{Fouvry-Kowalski-Michel}) in the $n$-sum yields
\begin{multline*}
\sum_{\substack{n \ll \frac{f^{2} \ell_{2} C}{bc_{1} T} \\ (n, q) = 1}} e \left(-\frac{e^{2} \overline{f}^{2} \ell_{1} \overline{\ell_{2}} mn}{q} \right) \mathcal{J}_{3}(h_{0}, h_{1}, \ell_{1}, \ell_{2}, m, n, c_{1}, c_{2})\\
 = \frac{f^{2} \ell_{2} C}{bc_{1} qT} \sum_{n} \delta(e^{2} \ell_{1} m \equiv f^{2} \ell_{2} n \tpmod{q}) \mathcal{J}_{4}(h_{0}, h_{1}, \ell_{1}, \ell_{2}, m, n, c_{1}, c_{2}),
\end{multline*}
where
\begin{equation*}
\mathcal{J}_{4}(h_{0}, h_{1}, \ell_{1}, \ell_{2}, m, n, c_{1}, c_{2}) \coloneqq \int_{\R} \mathcal{J}_{3} \left(h_{0}, h_{1}, \ell_{1}, \ell_{2}, m, \frac{f^{2} \ell_{2} Cy}{bc_{1} T}, c_{1}, c_{2} \right) e \left(-\frac{f^{2} \ell_{2} Cny}{bc_{1} qT} \right) dy.
\end{equation*}
Repeated integration by parts ensures an arbitrary saving unless
\begin{equation*}
n \ll \frac{bc_{1} qT}{f^{2} \ell_{2} C}.
\end{equation*}

Altogether, we obtain
\begin{align*}
\mathcal{M}_{\varphi}(T, H) &\ll C^{2} T^{\epsilon} \sum_{\substack{b, c_{1}, c_{2}, d, e, f \\ d^{2} = c_{1} c_{2} d^{\prime} h_{1}}} \frac{\mu(c_{1}) \mu(d) \mu(e) \mu(f)}{b^{4} c_{1}^{3}} \sup_{h_{0} \ll \frac{c_{1} c_{2} d^{\prime} C}{bH}} \ \sideset{}{^{\ast}} \sum_{\substack{T^{1-\epsilon} \ll \ell_{1}, \ell_{2} \ll T^{1+\epsilon} \\ (\ell_{1} \ell_{2}, c_{1} efh_{1}) = (\ell_{1}, \ell_{2}) = 1}} \lambda_{\varphi}(\ell_{1}) \lambda_{\varphi}(\ell_{2})\\
& \quad \times \sum_{\substack{q \ll \frac{\delta e^{2} f^{2} T^{1+\epsilon}}{L} \\ (q, ef) = 1}} \frac{1}{q^{2}} \sum_{\substack{m \ll \frac{bc_{1} qT^{\epsilon}}{e^{2} C} \\ (m, q) = 1}} \sum_{\substack{n \ll \frac{bc_{1} qT^{\epsilon}}{f^{2} C} \\ (n, q) = 1}} \left(\frac{8\delta q}{\ell_{1} \ell_{2}} \right) \delta(e^{2} \ell_{1} m \equiv f^{2} \ell_{2} n \tpmod{q})\\
& \quad \times \mathcal{J}_{4}(h_{0}, h_{1}, \ell_{1}, \ell_{2}, m, n, c_{1}, c_{2})+T^{-2023}.
\end{align*}

\subsection{Orthogonality}
Detecting the congruence condition at hand via primitive additive characters modulo $q$, we derive
\begin{equation*}
\delta(e^{2} \ell_{1} m \equiv f^{2} \ell_{2} n \tpmod{q}) = \sum_{q_{1} \mid q} \ \sideset{}{^{\star}} \sum_{a \tpmod{q_{1}}} e \left(\frac{a(e^{2} \ell_{1} m-f^{2} \ell_{2} n)}{q_{1}} \right).
\end{equation*}
This is thought of as an orthogonality relation in terms of the Ramanujan sum.

\subsection{Vorono\u{\i} Summation in $\ell_{1}$}
To circumvent increase of more variables, we remove~the asterisk and the coprimality conditions on the $\ell_{1}$-sum. For general non-squarefree $\ell_{1}$, we may utilise M\"{o}bius inversion and the Hecke multiplicativity relation~\eqref{eq:Hecke} to estimate peripheral variables trivially. Applying Vorono\u{\i} summation (Lemma~\ref{lem:Voronoi-twists}) in the $\ell_{1}$-sum implies
\begin{multline*}
\sum_{T^{1-\epsilon} \ll \ell_{1} \ll T^{1+\epsilon}} \lambda_{\varphi \otimes (\frac{q}{\cdot})}(\ell_{1}) e \left(\frac{ae^{2} \ell_{1} m}{q_{1}} \right) \mathcal{J}_{4}(h_{0}, h_{1}, \ell_{1}, \ell_{2}, m, n, c_{1}, c_{2})\\
 = \frac{T}{q} \sum_{\ell_{1}} \sum_{\sigma \in \{\pm \}} \lambda_{\varphi \otimes (\frac{q}{\cdot})}(\ell_{1}) e \left(\sigma \frac{\overline{ae^{2}} \ell_{1} \overline{m}}{q_{1}} \right) \mathcal{J}_{5}(h_{0}, h_{1}, \ell_{1}, \ell_{2}, m, n, c_{1}, c_{2}),
\end{multline*}
where
\begin{equation*}
\mathcal{J}_{5}^{\sigma}(h_{0}, h_{1}, \ell_{1}, \ell_{2}, m, n, c_{1}, c_{2}) \coloneqq \int_{0}^{\infty} \mathcal{J}_{4}(h_{0}, h_{1}, x, \ell_{2}, m, n, c_{1}, c_{2}) J_{\varphi}^{-\sigma} \left(\frac{4\pi \sqrt{\ell_{1} Tx}}{q} \right) dx
\end{equation*}
with $J_{\varphi}^{\pm}(x)$ defined in~\eqref{eq:J-phi}. Repeated integration by parts ensures an arbitrary saving unless
\begin{equation*}
\ell_{1} \ll \frac{q^{2+\epsilon}}{T}.
\end{equation*}

\subsection{Vorono\u{\i} Summation in $\ell_{2}$}
In a similar fashion, we make use of Vorono\u{\i} summation (Lemma~\ref{lem:Voronoi-twists}) in the $\ell_{2}$-sum, deducing
\begin{multline*}
\sum_{T^{1-\epsilon} \ll \ell_{2} \ll T^{1+\epsilon}} \lambda_{\varphi \otimes (\frac{q}{\cdot})}(\ell_{2}) e \left(-\frac{af^{2} \ell_{2} n}{q_{1}} \right) \mathcal{J}_{5}^{\sigma}(h_{0}, h_{1}, \ell_{1}, \ell_{2}, m, n, c_{1}, c_{2})\\
 = \frac{T}{q} \sum_{\ell_{1}} \sum_{\tau \in \{\pm \}} \lambda_{\varphi \otimes (\frac{q}{\cdot})}(\ell_{1}) e \left(-\tau \frac{\overline{af^{2}} \ell_{2} \overline{n}}{q_{1}} \right) \mathcal{J}_{6}^{\sigma, \tau}(h_{0}, h_{1}, \ell_{1}, \ell_{2}, m, n, c_{1}, c_{2}),
\end{multline*}
where
\begin{equation*}
\mathcal{J}_{6}^{\sigma, \tau}(h_{0}, h_{1}, \ell_{1}, \ell_{2}, m, n, c_{1}, c_{2}) \coloneqq \int_{0}^{\infty} \mathcal{J}_{5}^{\sigma}(h_{0}, h_{1}, \ell_{1}, x, m, n, c_{1}, c_{2}) J_{\varphi}^{-\tau} \left(\frac{4\pi \sqrt{\ell_{2} Tx}}{q} \right) dx.
\end{equation*}
Repeated integration by parts ensures an arbitrary saving unless
\begin{equation*}
\ell_{2} \ll \frac{q^{2+\epsilon}}{T}.
\end{equation*}

\subsection{Endgame}
Assembling the observations in the antecedent sections, we arrive at
\begin{align*}
\mathcal{M}_{\varphi}(T, H) &\ll C^{2} T^{2+\epsilon} \sum_{\substack{b, c_{1}, c_{2}, d, e, f \\ d^{2} = c_{1} c_{2} d^{\prime} h_{1}}} \frac{\mu(c_{1}) \mu(d) \mu(e) \mu(f)}{b^{4} c_{1}^{3}} \sup_{h_{0} \ll \frac{c_{1} c_{2} d^{\prime} C}{bH}} \sum_{\substack{q \ll \frac{\delta e^{2} f^{2} T^{1+\epsilon}}{L} \\ (q, ef) = 1}} \frac{1}{q^{4}}\sum_{\ell_{1}, \ell_{2} \ll \frac{q^{2+\epsilon}}{T}}\\
& \quad \times \lambda_{\varphi}(\ell_{1}) \lambda_{\varphi}(\ell_{2}) \sum_{\sigma, \tau \in \{\pm \}} \sum_{\substack{m \ll \frac{bc_{1} qT^{\epsilon}}{e^{2} C} \\ (m, q) = 1}} \sum_{\substack{n \ll \frac{bc_{1} qT^{\epsilon}}{f^{2} C} \\ (n, q) = 1}} \left(\frac{8\delta q}{\ell_{1} \ell_{2}} \right) \delta(\sigma e^{2} \ell_{2} m \equiv \tau f^{2} \ell_{1} n \tpmod{q})\\
& \quad \times \mathcal{J}_{6}^{\sigma, \tau}(h_{0}, h_{1}, \ell_{1}, \ell_{2}, m, n, c_{1}, c_{2})+T^{-2023}.
\end{align*}
where we execute the sum over $a$ via orthogonality after replacing $a \mapsto \overline{a}$. Upon applying the Rankin--Selberg~bound (Lemma~\ref{lem:Rankin-Selberg}) and estimating everything trivially, it transpires that
\begin{align*}
\mathcal{M}_{\varphi}(T, H) &\ll C^{2} T^{2+\epsilon} \sum_{\substack{b, c_{1}, c_{2}, d, e, f \\ d^{2} = c_{1} c_{2} d^{\prime} h_{1}}} \frac{\mu(c_{1}) \mu(d) \mu(e) \mu(f)}{b^{4} c_{1}^{3}} \sum_{q \ll \frac{\delta e^{2} f^{2} T^{1+\epsilon}}{L}} \frac{1}{q^{4}} \frac{q^{2+\epsilon}}{T} \frac{q^{2+\epsilon}}{T} \frac{bc_{1} qT^{\epsilon}}{e^{2} C} \frac{bc_{1} qT^{\epsilon}}{f^{2} C} \frac{1}{\sqrt{q}}\\
&\ll T^{\frac{5}{2}+\epsilon} \sum_{\substack{c_{1}, c_{2}, d, e, f \\ d^{2} = c_{1} c_{2} d^{\prime} h_{1}}} \mu(c_{1}) \mu(d) c_{1}^{-1} \delta^{\frac{5}{2}} e^{3} f^{3} L^{-\frac{5}{2}}.
\end{align*}
Theorem~\ref{main} then follows from the optimisations
\begin{equation*}
C = \sqrt{T}, \qquad L = c_{1} c_{2} \delta e^{2} f^{2} \sqrt{T}.
\end{equation*}
The proof is complete.\qed


\providecommand{\bysame}{\leavevmode\hbox to3em{\hrulefill}\thinspace}
\providecommand{\MR}{\relax\ifhmode\unskip\space\fi MR }
\providecommand{\MRhref}[2]{%
  \href{http://www.ams.org/mathscinet-getitem?mr=#1}{#2}
}
\providecommand{\href}[2]{#2}

\end{document}